   \def\MR#1{}
\newtheorem{Theorem}{Theorem}[section]
\newtheorem{Lemma}[Theorem]{Lemma}
\newtheorem{Proposition}[Theorem]{Proposition}
\theoremstyle{definition}
\newtheorem{Remark}[Theorem]{Remark}
\numberwithin{equation}{section}
\newcommand{\R}{\mathbb R}
\newcommand{\Z}{\mathbb Z}
\newcommand{\Proj}{\mathbb P}
\newcommand{\F}{\mathcal F}
\renewcommand{\email}[2][]{%
  \ifx\emails\@empty\relax\else{\g@addto@macro\emails{,\space}}\fi%
  \@ifnotempty{#1}{\g@addto@macro\emails{\textrm{(#1)}\space}}%
  \g@addto@macro\emails{#2}%
}
\begin{document}
\title[]{Global well-posedness of the Navier--Stokes equations for small initial data in frequency localized Koch--Tataru's space}

\author[]{Alexey Cheskidov}
\address[Alexey Ceskidov]{Institute for Theoretical Sciences, Westlake University, No. 600 Dunyu Road Sandun Town, Xihu District, Hangzhou, Zhejiang, China}
\email{cheskidov@westlake.edu.cn}
\thanks{}

\author[]{Taichi Eguchi}
\address[Taichi Eguchi]{Department of Mathematics, Faculty of Science and Engineering, Waseda University, 3-4-1 Okubo, Shinjuku-ku, Tokyo, 169-8555, Japan}
\email{hasegawa-t@akane.waseda.jp}
\thanks{}

%\subjclass[2020]{\textcolor{red}{Primary; 76W05, Secondary; 35A01, 35B40, 35K45, 42B35, 76D03}}

\keywords{
Strong solution, 
Mild solution, 
Incompressible Navier--Stokes equations, 
BMO space, 
Besov spaces. 
} 

\date{}

\maketitle
\par \noindent 
{\bf  Mathematics Subject Classification}: 35A01
\par \noindent
{\bf Funding}: 
The second author was supported by JST SPRING, [Grant Number JPMJSP2128]. 
%\par \noindent
%{\bf ORCID}: 
%Taichi Eguchi, https://orcid.org/0000-0003-0411-0322 
\begin{abstract}
We construct global smooth solutions to the incompressible Navier--Stokes equations in $\R^3$ for initial data in $L^2$ satisfying some smallness condition. The high-frequency part is assumed to be small in $BMO^{-1}$, while the low-frequency part is assumed to be small only in $\dot B^{-1}_{\infty,\infty}$. Since $BMO^{-1}$ is strictly embedded in $\dot B^{-1}_{\infty,\infty}$, our assumption is weaker than that of Koch and Tataru (2001), which we also demonstrate with an example of finite energy divergence-free initial data. Also, our solutions attain the initial data in the strong $L^2$ sense, and hence satisfy the energy balance for all time.

\end{abstract}
\section{Introduction}
We consider the incompressible Navier--Stokes equations in $\R^3$:   
\begin{equation}
\label{NS}
\left\{
\begin{array}{ll}
\partial_t u -\Delta u + ( u \cdot \nabla ) u + \nabla p = 0 
& \text{in} \ \R^3 \times (0,T),\\
{\rm div}\, u = 0 
& \text{in}\ \R^3 \times (0,T),\\
u(0) = a 
& \text{in} \ \R^3, 
\end{array} 
\right.
\end{equation}
where $u=u(x,t)=(u_1(x,t), u_2(x,t), u_1(x,t))$ and $p=p(x,t)$ denote the vector field of the fluid and its pressure, 
while $a=a(x)=(a_1(x),a_2(x),a_3(x))$ is a given initial data. 
Our aim is to show the existence of a global-in-time smooth solution to \eqref{NS} for a small initial data in the framework of scaling invariant spaces. 
\par

\subsection{Scaling invariant spaces.}
Pioneer work on the local and global existence in scaling invariant spaces was done by Kato and Fujita: 
\begin{Theorem}[\cite{FujitaKato, KatoFujita}] \label{tham:KF}
There exists $\mu_0>0$ such that for any divergence-free $a \in \dot H^\frac{1}{2}(\R^3)$ satisfying  $\| a \|_{\dot H^\frac{1}{2}(\R^3)} < \mu_0$ there is a global smooth solution to \eqref{NS}. 
\end{Theorem}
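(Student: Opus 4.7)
The plan is to establish global existence through a contraction mapping argument on the Duhamel (mild) formulation of \eqref{NS}. Applying the Leray projector $\mathbb{P}$ onto divergence-free vector fields to eliminate the pressure recasts the system as
\begin{equation*}
u(t) = e^{t\Delta}a - \int_0^t e^{(t-s)\Delta}\mathbb{P}\,\nabla\cdot(u\otimes u)(s)\,ds =: U(t)+B(u,u)(t).
\end{equation*}
Once constructed in a sufficiently small class, a fixed point of this integral equation will be promoted to a $C^\infty$ solution on $\R^3\times(0,\infty)$ by standard parabolic bootstrap.

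The functional setting I would employ is a scaling-invariant Banach space adapted to the critical regularity $\dot H^{1/2}$, following Kato--Fujita:
\begin{equation*}
X = \Bigl\{u : \|u\|_X := \sup_{t>0}\|u(t)\|_{\dot H^{1/2}}+\sup_{t>0}t^{1/4}\|u(t)\|_{\dot H^1}<\infty\Bigr\}.
\end{equation*}
Both terms of $\|\cdot\|_X$ are invariant under the parabolic scaling $u(x,t)\mapsto\lambda u(\lambda x,\lambda^2 t)$, as is appropriate for a critical problem. The heat-semigroup bound $\|e^{t\Delta}f\|_{\dot H^s}\lesssim t^{-(s-r)/2}\|f\|_{\dot H^r}$ immediately gives $\|U\|_X\lesssim\|a\|_{\dot H^{1/2}}$. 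The heart of the argument is the bilinear estimate $\|B(u,v)\|_X\le C\|u\|_X\|v\|_X$, which combines the smoothing of the heat kernel with a pointwise-in-time product bound. The product bound comes from the Sobolev embedding $\dot H^{3/4}(\R^3)\hookrightarrow L^4(\R^3)$ together with the interpolation $\|u\|_{\dot H^{3/4}}\le\|u\|_{\dot H^{1/2}}^{1/2}\|u\|_{\dot H^1}^{1/2}$, yielding $\|(u\otimes v)(s)\|_{L^2}\lesssim s^{-1/4}\|u\|_X\|v\|_X$. Inserting this into the Duhamel integral and using $\|e^{\tau\Delta}\mathbb{P}\nabla\cdot F\|_{\dot H^{1/2}}\lesssim\tau^{-3/4}\|F\|_{L^2}$ reduces the $\dot H^{1/2}$ component of the norm of $B(u,v)$ to a convergent beta integral $\int_0^t(t-s)^{-3/4}s^{-1/4}\,ds = B(1/4,3/4)$; the weighted $\dot H^1$ component is handled analogously via an auxiliary maximal-regularity estimate for the heat equation.

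With these two estimates, the Banach fixed-point theorem in a small closed ball of $X$ yields a unique global solution whenever $\|a\|_{\dot H^{1/2}}<\mu_0$, for $\mu_0$ chosen so that $4C\mu_0<1$. Smoothness on $\R^3\times(0,\infty)$ then follows by iterating the mild formulation and repeatedly applying heat-kernel smoothing together with the algebra property of $\dot H^s$ for $s>3/2$ on the nonlinear term $u\otimes u$, producing $u\in L^\infty_{\mathrm{loc}}((0,\infty);\dot H^k)$ for every $k$ and hence $u\in C^\infty(\R^3\times(0,\infty))$. The main technical obstacle lies in calibrating the time weights in the bilinear estimate so that the singular factors $(t-s)^{-\alpha}$ arising from the heat kernel and $s^{-\beta}$ arising from the auxiliary norm produce an integrable product; this balance is exactly what forces the weight $t^{1/4}$ on $\|u(t)\|_{\dot H^1}$ and identifies $\dot H^{1/2}$ as the critical Sobolev scale in three dimensions.
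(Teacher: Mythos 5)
The paper does not actually prove this statement: Theorem~\ref{tham:KF} is quoted from Fujita--Kato \cite{FujitaKato,KatoFujita} and used as a black box (it is invoked in Section~4 to restart the solution globally once $\|\nabla u(T_{**})\|_2 \leq \mu_0$). So there is no internal proof to compare against; what you have written is essentially the classical Fujita--Kato semigroup argument that stands behind the citation, and in outline it is correct: mild formulation after applying the Leray projector, contraction in the scaling-invariant space with norm $\sup_t\|u(t)\|_{\dot H^{1/2}}+\sup_t t^{1/4}\|u(t)\|_{\dot H^1}$, the product bound via $\dot H^{3/4}(\R^3)\hookrightarrow L^4(\R^3)$ and the interpolation $\|u\|_{\dot H^{3/4}}\leq\|u\|_{\dot H^{1/2}}^{1/2}\|u\|_{\dot H^1}^{1/2}$, the beta integral $\int_0^t(t-s)^{-3/4}s^{-1/4}\,ds=B(1/4,3/4)$ for the $\dot H^{1/2}$ component, smallness of $\mu_0$ to close the fixed point, and parabolic bootstrap for smoothness. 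All stated exponents in that part check out.

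The one step you cannot do ``analogously,'' and where your appeal to an unspecified maximal-regularity estimate papers over a real issue, is the weighted $\dot H^1$ component. Pairing the smoothing bound $\|e^{(t-s)\Delta}\mathbb{P}\nabla\cdot F\|_{\dot H^1}\lesssim (t-s)^{-1}\|F\|_{L^2}$ with your product estimate $\|(u\otimes v)(s)\|_{L^2}\lesssim s^{-1/4}\|u\|_X\|v\|_X$ yields the divergent integral $\int_0^t (t-s)^{-1}s^{-1/4}\,ds$, and $L^\infty$-in-time maximal regularity is exactly the endpoint where such estimates fail, so the gap must be closed differently. The standard repair (Kato's splitting) is: on $(0,t/2)$ keep the bound above, where $(t-s)^{-1}\sim t^{-1}$ renders the integral harmless after multiplying by the weight $t^{1/4}$; on $(t/2,t)$ measure the nonlinearity half a derivative higher, using the product law $\|u\otimes v\|_{\dot H^{1/2}}\lesssim \|u\|_{\dot H^1}\|v\|_{\dot H^1}\lesssim s^{-1/2}\|u\|_X\|v\|_X$ together with $\|e^{(t-s)\Delta}\mathbb{P}\nabla\cdot F\|_{\dot H^1}\lesssim (t-s)^{-3/4}\|F\|_{\dot H^{1/2}}$, so that the singularity becomes $(t-s)^{-3/4}s^{-1/2}$, which is integrable near $s=t$ and scales correctly against the $t^{1/4}$ weight. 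With that modification spelled out, your argument is a complete and correct proof of the quoted theorem, matching the classical route rather than anything proved in this paper.
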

The space $\dot H^{1/2} (\R^3)$ belongs to a class of scaling invariant spaces $X$ with the property that
\begin{align}
\| a \|_X = \| a_\lambda \|_X 
\,\,\,\,
\text{for all}
\,\,\,\,
0<\lambda, 
\end{align}
where $a_\lambda (x) := \lambda a( \lambda x)$ for $x \in \R^3$ and $\lambda>0$. 
More generally, a Bochner space $Y(0,\infty;X)$ is scaling invariant if  
\begin{align}
\| u \|_{Y(0,\infty;X)} = \| u_\lambda \|_{Y(0,\infty;X)} 
\,\,\,\,
\text{for all}
\,\,\,\,
0<\lambda, 
\end{align}
where $u_\lambda(x,t) := \lambda u (\lambda x, \lambda^2 t)$ for $(x,t) \in \R^3 \times (0,\infty)$ and $\lambda>0$. 
Such scaling admits the following property: 
\begin{align}
u\text{ is a solution of \eqref{NS} in } \R^3 \times (0,\infty) \Leftrightarrow 
u_\lambda\text{ is a solution of \eqref{NS} in } \R^3 \times (0,\infty).  
\end{align}
In general, it is anticipated that initial data $a$ in a scaling invariant (or critical) space $X$ gives rise to a solution in $L^\infty (0,T;X)$, with $T=\infty$ provided that $\|a\|_X$ is small enough. A natural problem is to find the largest critical space $X$ with such a property.
 
Following \cite{FujitaKato,KatoFujita}, Kato \cite{Kato} and Giga \cite{Giga} extended such a class of initial data to the scaling invariant Lebesgue space $L^3(\R^3)$. Kozono and Yamazaki \cite{KozonoYamazaki}, Cannone \cite{Cannone, Cannone2}, Planchon \cite{Planchon}, and Cannone and Planchon \cite{CannonePlanchon} reached scaling invariant Besov and Besov--Morrey spaces which are larger than $L^3(\R^3)$. For instance, they were able to reach  Besov spaces $\dot B^{-1+3/p}_{p,\infty} (\R^3)$ with $3\leqq p<\infty$. Finally,
Koch and Tataru \cite{KochTataru} proved global existence of mild solutions for small initial data in $BMO^{-1}(\R^3)$. 
Relationships between the above critical spaces, where the local or global well-posedness for small initial data holds, can be summarized as follows: 
\begin{align}
\dot H^{1/2}(\R^3) \subset L^3(\R^3) \subset \dot B^{-1+3/p}_{p,\infty} (\R^3) \subset BMO^{-1}(\R^3), 
\end{align}
where $3<p<\infty$. 
For further results on the global well-posedness, see 
Giga and Miyakawa \cite{GigaMiyakawa}, 
Lei and Lin \cite{LeiLin}, 
Iwabuchi and Nakamura \cite{IwabuchiNakamura}, 
and Kozono, Okada and Shimizu \cite{KozonoOkadaShimizu}. 
In frequency localized Besov and $BMO^{-1}(\R^3)$ spaces, for local-in-time existence theorems, we also refer \cite{IwabuchiNakamura} and \cite{KochTataru}. 

Since their construction in \cite{KochTataru}, regularity properties of Koch-Tataru solutions have been extensively studied. Analyticity in space was proved by Miura and Sawada \cite{MiuraSawada}, as well as Germain, Pavlovi\'c, and Staffilani \cite{GermainPavlovicStaffilani} for all positive time. The best-known temporal regularity, recently proved by Hou in \cite{2410.16468}, states that Koch-Tataru solutions are weak*-continuous in $BMO^{-1}$.

In contrast to global well-posedness, Bourgain and Pavlovi\`{c} \cite{BourgainPavlovic} proved that \eqref{NS} is ill-posed in the largest critical space $\dot B^{-1}_{\infty,\infty}(\R^3)$ in the sense of norm inflation. It should be noted that the following embedding holds: 
\begin{align}
BMO^{-1}(\R^3) \subset \dot B^{-1}_{\infty,\infty}(\R^3). 
\end{align} 
For further results, we also refer to, e.g., Yoneda \cite{Yoneda}, Wang \cite{Wang}, and Iwabuchi and Takada \cite{IwabuchiTakada}. 
In the above ill-posedness results, the initial data is constructed so that it is supported on large frequencies, but nonlinear interactions produce fast growth on low frequencies. 

In this paper, we study solutions to the 3D NSE with finite energy divergence-free initial data which is small in the class
\begin{equation} \label{eq:space}
X=BMO_{\sqrt{\delta}}^{-1}(\R^3) \cap \dot B^{-1}_{\infty,\infty}(\R^3),
\end{equation}
where $0<\sqrt{\delta}<<1$ is a small length scale. 
Here the function space $BMO^{-1}_{\sqrt{\delta}}(\R^3)$ denotes a local $BMO^{-1}(\R^3)$ space in which only balls of size $\sqrt{\delta}$ and smaller are considered, i.e., roughly speaking, the part of the initial data above frequency $1/\sqrt{\delta}$. In other words, only the high-frequency part of the initial data is required to be small in $BMO^{-1}(\R^3)$, while the rest is assumed to be small in a larger space $\dot B^{-1}_{\infty,\infty}(\R^3)$. The precise value of $\delta$ is specified in the main result below.

For such initial data, we construct a global smooth solution to \eqref{NS} in $\R^3 \times (0,\infty)$, which is also continuous in $L^2(\R^3)$ at the initial time. 
Since the smallness condition in $\dot B^{-1}_{\infty,\infty}$ is weaker than that in $BMO^{-1}$, our assumption is weaker than that of \cite{KochTataru}, which we also demonstrate with an example of divergence-free initial data with arbitrary large energy. More precisely, we construct a vector field with arbitrary large (but finite) energy, which is arbitrary small in $BMO^{-1}_{\sqrt{\delta}}(\R^3) \cap \dot B^{-1}_{\infty,\infty}(\R^3)$, but arbitrary large in $BMO^{-1}(\R^3)$. 

We emphasize that our constructed solution attains the initial data strongly in $L^2$:
\begin{equation} \label{eq:L^2-convergence}
u(t) \to a \qquad \text{as} \qquad t \to 0+,
\end{equation}
in $L^2(\R^2)$. Recall that the best known temporal regularity of Koch-Taturu solutions is the weak* continuity in $BMO^{-1}$  \cite{2410.16468}. So by \eqref{eq:L^2-convergence} we rule out a possible energy jump at the initial time.

\subsection{Energy balance.}
In 1934, in the pioneering work \cite{Leray}, for any finite energy divergence-free initial data, Leray proved global existence of weak solutions to the 3D NSE satisfying the following energy inequality:
\begin{equation} \label{eq:energy-in}
\frac12 \|u(t)\|_{L_x^2}^2 \leq \frac12 \|u(t_0)\|_{L_x^2}^2 -  \int_{t_0}^t \|\nabla u(\tau)\|_{L_x^2}^2 \, d\tau,
\end{equation}
for all $t \geq t_0$, a.a. $t_0\geq 0$ (including $t_0=0$). Such weak solutions are usually referred to as Leray-Hopf solutions. In general, the existence of Leray-Hopf solutions satisfying the energy equality is not known. In fact, even the existence of Leray-Hopf solutions with continuous or decreasing energy are open questions. In \cite{2407.17463}, the existence of weak solutions with continuous energy was shown for any divergence-free finite energy initial data, but those solutions are not Leray-Hopf.

In this paper, for any divergence-free initial data which is small in space \eqref{eq:space}, we construct a global solution $u(t)$ which is smooth for positive time, but also continuous in $L^2$ at the initial time, so 
\begin{equation} \label{eq:cont_u_in_L^2}
u \in C([0,\infty);L^2(\R^3)).
\end{equation}
Moreover, since the constructed solution $u(t)$ is smooth for positive time, it satisfies the energy balance for positive time. Combined with \eqref{eq:cont_u_in_L^2}, this implies the energy balance starting from the initial time:
\[
\frac12 \|u(t)\|_{L_x^2}^2 = \frac12 \|u(0)\|_{L_x^2}^2 -  \int_{0}^t \|\nabla u(\tau)\|_{L_x^2}^2 \, d\tau,
\]
for all $t>0$. Therefore $u(t)$ is a Leray-Hopf solution satisfying the energy equality.

\par
This paper is organized as follows: in Section $2$, we state our main result. 
Section $3$ is devoted to some preliminaries. In Section $4$, we prove our main result. In Section $5$, we construct an example of initial data satisfying the assumptions of our main theorem, but with arbitrary large $BMO^{-1}$ norm.
\section{Main Result}
Define a function $\delta: \R_+ \times \R_+ \to \R_+$ given by 
\begin{equation} 
(\eta, E) \mapsto
\delta(\eta, E) \equiv \mu_0^{-2} E e^{-\frac{1}{C_0 \eta}}, 
\end{equation}
where $C_0$ is an absolute constant determined later and $\mu_0$ is the Kato-Fujita constant in Theorem~\ref{tham:KF}. 
We also define $T_* = T_* (a)$ for $a \in L^2(\R^3)$ as 
\begin{align}
T_*=T_*(a) = \mu_0^{-2} \| a \|_{L^2(\R^3)}^2. 
\end{align} 
\begin{Theorem} \label{thm1}
There exists $\varepsilon_0>0$, 
such that for any $\varepsilon <\varepsilon_0$ and divergence free $a \in L^2(\R^3)$ satisfying
\begin{align}
\| a \|_{\dot B^{-1}_{\infty,\infty}(\R^3)} < \varepsilon, \qquad
\| a \|_{BMO^{-1}_{\sqrt{\delta}}(\R^3)} <  \varepsilon, \qquad \delta = \delta(\varepsilon,\| a \|_{L^2(\R^3)}^2),
\end{align} 
there exists a global smooth solution $u \in C^\infty(\mathbb{R}^3 \times (0, \infty))$ to \eqref{NS} such that
\[
u(t) \to a \qquad \text{as} \quad t\to 0+
\]
in $L^2(\R^3)$.

In particular, $u(t)$ satisfies the energy equality
\[
\frac12 \|u(t)\|_{L_x^2}^2 = \frac12 \|u(0)\|_{L_x^2}^2 -  \int_{0}^t \|\nabla u(\tau)\|_{L_x^2}^2 \, d\tau,
\]
for all $t>0$.
\end{Theorem}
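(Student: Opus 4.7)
My plan is to combine a Koch--Tataru contraction in the adapted critical space $X$ with standard energy estimates. The smallness of $a$ in $BMO^{-1}_{\sqrt{\delta}} \cap \dot B^{-1}_{\infty,\infty}$ will force $e^{t\Delta}a$ to be small in $X$, so the usual fixed-point argument produces a global smooth solution; the additional $L^2$ hypothesis is then exploited to upgrade attainment of the initial data from weak to strong in $L^2$, whence the energy equality follows.

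The main technical point is the bound on $\|e^{t\Delta}a\|_X$, where
\[
\|u\|_X = \sup_{t>0}\,t^{1/2}\|u(t)\|_{L^\infty} + \sup_{x_0,R}\Bigl(R^{-3}\!\int_0^{R^2}\!\!\int_{B(x_0,R)} |u|^2\,dy\,ds\Bigr)^{1/2}.
\]
The first term is $\lesssim\varepsilon$ by the heat-semigroup characterization of $\dot B^{-1}_{\infty,\infty}$. The Carleson piece splits by radius. For $R\le\sqrt{\delta}$ the bound is the definition of $\|a\|_{BMO^{-1}_{\sqrt{\delta}}}<\varepsilon$. For $R>\sqrt{\delta}$ I split the time integral at $s=\delta$: the segment $s\le\delta$ is handled by covering $B(x_0,R)$ with $\sim(R/\sqrt{\delta})^3$ balls of radius $\sqrt{\delta}$ and summing the localized $BMO^{-1}$ estimate, giving a normalized bound $\lesssim\varepsilon^2$; the segment $s\ge\delta$ is controlled by the minimum of $\int\|e^{s\Delta}a\|_{L^\infty}^2\,ds\lesssim\varepsilon^2\log(R^2/\delta)$ (from $\dot B^{-1}_{\infty,\infty}$) and $\int\|e^{s\Delta}a\|_{L^2}^2\,ds\le ER^2$ (from the energy bound, normalizing to $E/R$). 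The worst-case $R$ is where these two balance, yielding a bound $\lesssim\varepsilon^2\log(1/\delta)$; the prescription $\delta=\mu_0^{-2}Ee^{-1/(C_0\varepsilon)}$ is tuned precisely so that $\varepsilon^2\log(1/\delta)\lesssim\varepsilon/C_0$, which is as small as desired.

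With this smallness in hand, the standard Koch--Tataru bilinear estimate together with Banach's fixed-point theorem yield a unique global mild solution $u\in X$ satisfying $\|u\|_X\lesssim\varepsilon$; the known regularity results (spatial analyticity of Miura--Sawada and Germain--Pavlovi\'c--Staffilani plus classical parabolic bootstrapping) give $u\in C^\infty(\R^3\times(0,\infty))$. A Gr\"onwall bootstrap in the mild formulation, using $\|u(s)\|_{L^\infty}\lesssim\varepsilon/\sqrt{s}$ together with $\|a\|_{L^2}<\infty$, yields $u\in L^\infty_t L^2_x$ with $\|u(t)\|_{L^2}\lesssim\|a\|_{L^2}$. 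For weak $L^2$-continuity at $t=0$, write $u(t)-a=(e^{t\Delta}a-a)+B(u,u)(t)$; the linear part vanishes strongly, while pairing the bilinear term against $\phi\in C_c^\infty$ and integrating by parts reduces it to $\int_0^t\langle u\otimes u(s),\nabla e^{(t-s)\Delta}\Proj\phi\rangle\,ds$, bounded by $Ct\|a\|_{L^2}^2\|\nabla\Proj\phi\|_{L^\infty}\to0$. Passing the energy equality for smooth approximations (e.g.\ $a_n=e^{(1/n)\Delta}a$) to the limit delivers the energy \emph{inequality} $\|u(t)\|_{L^2}^2+2\int_0^t\|\nabla u\|_{L^2}^2\le\|a\|_{L^2}^2$, and combining $\|a\|_{L^2}\le\liminf_{t\to 0^+}\|u(t)\|_{L^2}\le\limsup_{t\to 0^+}\|u(t)\|_{L^2}\le\|a\|_{L^2}$ upgrades the weak convergence to strong. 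The smoothness on $(0,\infty)$ gives the energy equality on every $[t_0,t]$ with $t>t_0>0$, which then passes to $t_0=0$ thanks to the strong $L^2$-continuity.

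The chief obstacle is the Carleson estimate at intermediate scales, where the small-scale localized $BMO^{-1}$ information, the pointwise decay from $\dot B^{-1}_{\infty,\infty}$, and the large-scale energy constraint must be combined seamlessly; the gap between $\dot B^{-1}_{\infty,\infty}$ and $BMO^{-1}$ produces the logarithmic factor $\log(1/\delta)$ whose absorption forces the exponentially small choice of $\delta$ dictated in the statement. The remainder is careful bookkeeping within the Koch--Tataru framework combined with the standard Leray--Hopf machinery.
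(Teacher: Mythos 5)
Your proposal founders on its central step, and it founders exactly where the theorem departs from Koch--Tataru. You run the contraction globally in the full Koch--Tataru space, which requires the Carleson norm of $e^{t\Delta}a$ to be small at \emph{all} scales $R$; since that norm is equivalent to $\|a\|_{BMO^{-1}}$, you are in effect trying to re-derive global $BMO^{-1}$ smallness from the hypotheses --- and this is impossible: Theorem~\ref{Thm:example} of the paper constructs data satisfying the hypotheses of Theorem~\ref{thm1} (small in $\dot B^{-1}_{\infty,\infty}$, no frequencies above $1/\sqrt{\delta}$, energy $E$) whose $BMO^{-1}$ norm exceeds any prescribed $M$. The quantitative error in your sketch sits at the crossover of your two bounds $\min\{E/R,\ \varepsilon^2\log(R^2/\delta)\}$: balancing gives $R_c\sim E/(\varepsilon^2 L)$ with value $\approx \varepsilon^2\log E+\varepsilon/C_0$, \emph{not} $\varepsilon^2\log(1/\delta)$ --- note that $\log(1/\delta)=\frac{1}{C_0\varepsilon}-\log(E/\mu_0^2)$ actually \emph{decreases} in $E$, while the true crossover value grows like $\varepsilon^2\log E$. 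Since the theorem requires $\varepsilon_0$ to be independent of $E=\|a\|_{L^2}^2$, the term $\varepsilon^2\log E$ is uncontrollable; concretely, for the lacunary data of Theorem~\ref{Thm:example} the Carleson content at the intermediate scale $R\sim\lambda_{q_0}^{-1}$ is of size $M$, not $o(1)$, and your minimum does not see this because $E/R$ only becomes effective at enormous $R$ when $E$ is large. Consequently there is no small linear term to feed a global fixed-point argument, and ``the usual fixed-point argument produces a global smooth solution'' cannot be carried out.

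The paper avoids this by never asking for smallness at all scales: the contraction is run only on the finite interval $(0,T_*)$, $T_*=\mu_0^{-2}\|a\|_2^2$, in the hybrid space $X_{T_*,\delta}$ with norm $\max(\|u\|_{0,T_*},\llbracket u\rrbracket_\delta)$, whose Carleson component lives only at scales below $\sqrt{\delta}$. The logarithm then appears not in the linear estimate but in the \emph{bilinear} one (Lemma~\ref{nonlinear estimate 1}): the portion $\int_\delta^t$ of $N(u,v)$ is estimated by sup-norms alone at the cost of $\ln\big(\frac{1+\sqrt{1-\delta/T_*}}{1-\sqrt{1-\delta/T_*}}\big)\approx\ln(T_*/\delta)=\frac{1}{C_0\varepsilon}$, which is uniform in $E$ precisely because $\delta$ is proportional to $T_*$, and is absorbed since it multiplies $\|u\|_{\delta,T_*}\|v\|_{\delta,T_*}\lesssim\varepsilon^2$. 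Globality is then obtained not from the contraction but from energy dissipation: weighted $\Lambda^\alpha$ estimates (Lemma~\ref{energy estimate}) place the fixed point in the Ladyzhenskaya--Prodi--Serrin class, hence it is smooth with energy balance on $(0,T_*]$; since $\int_0^{T_*}\|\nabla u\|_2^2\,ds\le\|a\|_2^2=\mu_0^2T_*$, a pigeonhole argument produces $T_{**}<T_*$ with $\|\nabla u(T_{**})\|_2\le\mu_0$, and Theorem~\ref{tham:KF} extends the solution globally --- this is the whole reason for the exact definition of $T_*$, and it is absent from your proposal. Your route to strong $L^2$ attainment (weak continuity, an energy inequality down to $t=0$ via approximation, and a norm squeeze) could plausibly be made rigorous, though the paper's argument via $u\in L^2_w(0,T_*;L^\infty)\cap L^2(0,T_*;\dot H^1)\cap L^\infty(0,T_*;L^2)$ and \cite{CheskidovLuoE} is more direct; but this is moot until the existence step is repaired along the lines above.
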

\begin{Remark}
The parameter $\delta$ depends on $\varepsilon$ and converges very rapidly to $0$ as $\varepsilon$ approches $0$. 
This means that smallness of the $BMO^{-1}$ norm is necessary only at very high frequencies. 
Since we do not require smallness of the low-frequency part in $BMO^{-1}(\R^3)$, our assumption is weaker than that in \cite{KochTataru}, which we further illustrate with an example of finite energy divergence-free initial data in Theorem~\ref{Thm:example}.
\end{Remark}
We note that our argument yields a stronger result below. However, the main novelty lies in the smallness of the length scale $\sqrt{\delta}$ compared to $T_*$, and thus we focus on Theorem~\ref{thm1}.
\begin{Theorem} \label{thm2}
There exists $\varepsilon_0>0$, 
such that for any $\varepsilon <\varepsilon_0$ and divergence free $a \in L^2(\R^3)$ satisfying
\begin{align}
\| P_{\geq\frac{1}{\sqrt{T_*(a)}}} a \|_{\dot B^{-1}_{\infty,\infty}(\R^3)} <\varepsilon, \qquad
\| P_{\geq\frac{1}{\sqrt{\delta}}} a \|_{BMO^{-1}(\R^3)} < \varepsilon, \qquad \delta = \delta(\varepsilon,\| a \|_{L^2(\R^3)}^2),
\end{align} 
there exists a global smooth solution $u \in C^\infty(\mathbb{R}^3 \times (0, \infty))$ to \eqref{NS} such that
\[
u(t) \to a \qquad \text{as} \quad t\to 0+
\]
in $L^2(\R^3)$.
%continuous in $L^2(\R^3)$ at $t = 0$.   
\end{Theorem}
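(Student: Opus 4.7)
The proof will follow the same strategy as Theorem~\ref{thm1}, observing that the argument there only uses the $\dot B^{-1}_{\infty,\infty}$-smallness of $a$ at frequencies $\gtrsim 1/\sqrt{T_*}$: the lower frequencies are absorbed by the kinetic energy on the short initial layer $[0,T_*]$ of length $\mu_0^{-2}\|a\|_{L^2}^2$, which is precisely the reason the length scale $\sqrt{T_*}$ appears in the hypothesis. The main task is therefore to redo the proof of Theorem~\ref{thm1} while replacing, in each estimate, any use of $\|a\|_{\dot B^{-1}_{\infty,\infty}}$ by the pair $\big(\|P_{\ge 1/\sqrt{T_*}}a\|_{\dot B^{-1}_{\infty,\infty}},\,\|a\|_{L^2}\big)$.

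Concretely, I would split
\[
a = a_{HH} + a_{LH} + a_{LL},
\qquad
a_{HH} = P_{\ge 1/\sqrt{\delta}} a,
\quad
a_{LH} = P_{1/\sqrt{T_*}\le\cdot<1/\sqrt{\delta}} a,
\quad
a_{LL} = P_{<1/\sqrt{T_*}} a,
\]
and apply Koch--Tataru to $a_{HH}$ (small in $BMO^{-1}(\R^3)$ by hypothesis) to obtain a global smooth mild solution $v$ with $v(0)=a_{HH}$ and small scaling invariant norm. Writing $u = v + w$, the perturbation solves
\[
\partial_t w - \Delta w + v\cdot\nabla w + w\cdot\nabla v + w\cdot\nabla w + \nabla q = 0,
\qquad
w(0) = a_{LH}+a_{LL},
\]
which I would solve on $[0,T_*]$ via a Picard iteration in the intersection of the Leray class $C([0,T_*];L^2(\R^3))\cap L^2(0,T_*;\dot H^1(\R^3))$ and a Koch--Tataru-type scaling invariant space, exactly as in Theorem~\ref{thm1}. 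The drift $v\cdot\nabla w$ drops from the $L^2$ energy identity because $v$ is divergence free, and the stretching term $w\cdot\nabla v$ is controlled by Sobolev embedding together with the Gevrey-type regularizing bounds available for Koch--Tataru solutions; the short length $T_*=\mu_0^{-2}\|a\|_{L^2}^2$ is exactly what is needed for the Gronwall loop to close.

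At the restart time $t=T_*$ I need $\|u(T_*)\|_{BMO^{-1}(\R^3)}$ to lie below the Koch--Tataru threshold so that the solution extends smoothly and globally. Parabolic smoothing over a time of order $T_*$ converts the Besov smallness of $P_{\ge 1/\sqrt{T_*}}a$ into $BMO^{-1}$-smallness up to a logarithmic loss accumulated over the $O(1/\varepsilon)$ intermediate dyadic shells between scales $\sqrt{\delta}$ and $\sqrt{T_*}$; this loss is exactly compensated by the exponentially small ratio $\delta/T_* = e^{-1/(C_0\varepsilon)}$ via the embedding $\dot B^{-1}_{\infty,2}(\R^3)\hookrightarrow BMO^{-1}(\R^3)$. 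The strong $L^2$-convergence $u(t)\to a$ as $t\to 0+$ is built into the Picard iteration, since $e^{t\Delta}a\to a$ in $L^2(\R^3)$ and the nonlinear Duhamel contribution vanishes in $L^2$ at $t=0$. I expect the main obstacle to be the careful calibration of the constant $C_0$: one must verify that summing the $\dot B^{-1}_{\infty,\infty}$-smallness over the intermediate dyadic shells, together with the purely $L^2$-controlled contribution of $a_{LL}$, indeed keeps $\|u(T_*)\|_{BMO^{-1}(\R^3)}$ below the Koch--Tataru threshold, so that the global extension past $T_*$ is justified.
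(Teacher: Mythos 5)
You take a genuinely different route from the paper, and it breaks at a specific point. First, note the paper gives no standalone proof of Theorem~\ref{thm2}: it asserts that the single fixed-point argument proving Theorem~\ref{thm1} applies as is, so the comparison is with that proof. Your plan's fatal step is the restart at $t=T_*$, where you require $\|u(T_*)\|_{BMO^{-1}}$ to lie below the Koch--Tataru threshold. This is unobtainable for large-energy data: all of the (arbitrarily large) energy may sit at frequencies below $1/\sqrt{T_*}$, which are constrained neither by the hypotheses nor by heat smoothing on a time scale $T_*$. Bernstein only gives $\|P_{<1/\sqrt{T_*}}u(T_*)\|_{BMO^{-1}} \lesssim T_*^{-1/4}\|u(T_*)\|_2 \sim \mu_0^{1/2}\|a\|_2^{1/2}$, which is large whenever $\|a\|_2$ is, and this is sharp (data concentrated just below frequency $1/\sqrt{T_*}$ saturating Bernstein). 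The paper's extension mechanism is entirely different and never needs smallness of a critical norm at positive time: the energy equality yields $\int_0^{T_*}\|\nabla u(s)\|_2^2\,ds \leq \|a\|_2^2$, and since $T_*=\mu_0^{-2}\|a\|_2^2$, a pigeonhole produces $T_{**}<T_*$ with $\|\nabla u(T_{**})\|_2 \leq \mu_0$; the solution is then continued globally by Theorem~\ref{tham:KF} at $T_{**}$ together with strong-strong uniqueness. This dissipation-pigeonhole step is the missing idea in your proposal, and it is precisely why the value $T_*(a)=\mu_0^{-2}\|a\|_2^2$ appears in the statement.

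Two further steps of your plan are unsubstantiated. The Gronwall loop for the perturbation $w$ does not close: a Koch--Tataru solution $v$ satisfies only $\|v(s)\|_\infty \lesssim \varepsilon s^{-1/2}$ and $\|\nabla v(s)\|_\infty \lesssim \varepsilon s^{-1}$, so the factors $\exp\big(C\int_0^{T_*}\|v(s)\|_\infty^2\,ds\big)$ and $\exp\big(C\int_0^{T_*}\|\nabla v(s)\|_\infty\,ds\big)$ diverge logarithmically at $s=0$; the shortness of $T_*$ is irrelevant because the divergence is at the origin. The paper never decomposes the data: it runs one contraction for $\Phi u = e^{t\Delta}a - N(u,u)$ in the hybrid space $X_{T_*,\delta}$ with norm $\max(\|u\|_{0,T_*},\llbracket u\rrbracket_\delta)$, and the borderline logarithm is isolated and beaten inside the bilinear estimate: Lemma~\ref{nonlinear estimate 1} pays $C_1 F(\delta/T_*)\|u\|_{\delta,T_*}\|v\|_{\delta,T_*}$ with $F(\delta/T_*) = \ln\big(\frac{1+\sqrt{1-\delta/T_*}}{1-\sqrt{1-\delta/T_*}}\big) < \frac{1}{16 C_1\varepsilon}$ thanks to $\delta/T_*=e^{-1/(C_0\varepsilon)}$, which closes against the contraction radius $4\varepsilon$. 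Your intuition that a logarithmic loss over the dyadic shells between $\sqrt{\delta}$ and $\sqrt{T_*}$ is compensated by the exponential smallness of $\delta/T_*$ is exactly the right one, but you deploy it at the restart time through an $\ell^\infty$-to-$\ell^2$ shell summation rather than inside the Duhamel estimate where it is actually needed. Finally, $L^2$-continuity at $t=0$ is not ``built into the Picard iteration'' for merely-$L^2$ data: the paper obtains it a posteriori, using Lemma~\ref{energy estimate} and the Fabes--Jones--Riviere equivalence (Theorem~\ref{FJR}) to place $u$ in the Ladyzhenskaya--Prodi--Serrin class, then checking $u\in L^2_w(0,T_*;L^\infty)\cap L^2(0,T_*;\dot H^1)\cap L^\infty(0,T_*;L^2)$ and invoking the energy-equality theorem of \cite{CheskidovLuoE}.
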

In this theorem we used $P_{\geq \lambda}$ for a projection onto frequencies above $\lambda$.

\section{Preliminaries}
\subsection{Notations}
For a function $f: \R^3 \to \R^3$, we define 
\begin{align}
\| f \|_p
= 
\| f \|_{L^p(\R^3)} 
= \bigg \{ \int_{\R^3} | f(x) |^p \, dx \bigg \}^{\frac{1}{p}}. 
\end{align}
The Besov spaces $\dot B^{s}_{p,r} = \dot B^{s}_{p,r}(\R^3)$ and the Triebel--Lizorkin spaces $\dot F^{s}_{p,r} = \dot F^{s}_{p,r}(\R^3)$ are defined via the Littlewood--Paley decomposition $f=\sum_{q\in \Z} f_q$. 
Let $\phi\in C^\infty_0(\R^3)$ with $\text{supp} \, \phi \subset \{ \xi=(\xi_1,\xi_2,\xi_3); \,\, 1/2 < |\xi| < 2 \}$ satisfying $|\phi_q(\xi)| \leq 1$ and 
\begin{align}
\sum_{q \in \Z} \phi_q(\xi) \equiv 1 \qquad \xi \in \R^3, 
\end{align}
where $\phi_q (\xi) := \phi(\lambda_q \xi)$, $\lambda_q := 2^q$, $q \in \Z$, $\xi \in \R^3$. 
We use $f_q (x) := \F^{-1}[\phi_q(\xi) \hat f(\xi) ] (x) $, 
$f_{< q}(x) := \sum_{q^\prime < q} f_{q^\prime}(x)$ for $x \in \R^3$.  
For $s\in \R$, $1\leqq p,r \leqq \infty$, we define Besov norms
\begin{align}
\| f \|_{\dot B^s_{p,r}} = \| f \|_{\dot B^s_{p,r}(\R^3)} := 
\left \{ 
\begin{array}{ll}
\Bigg \{ \displaystyle \sum_{q \in \Z} (\lambda_q^s \| f_q \|_p )^r \Bigg \}^{\frac{1}{r}} & 1 \leqq r < \infty, \\
\displaystyle \sup_{q \in \Z} \lambda_q^s \| f_q\|_p & r = \infty. 
\end{array}
\right. 
\end{align}
As for the Triebel--Lizorkin spaces, for $1\leqq p < \infty$, we define 
\begin{align}
\| f \|_{\dot F^s_{p,r}} = \| f \|_{\dot F^s_{p,r}(\R^3)} := 
\left \{ 
\begin{array}{ll}
\Bigg \|
\bigg \{ \displaystyle \sum_{j \in \Z} (\lambda_q^s | f_q | )^r \bigg \}^{\frac{1}{r}} \Bigg \|_p & 1 \leqq r < \infty, \\
\Big \| \displaystyle \sup_{q \in \Z} \lambda_q^s | f_q | \Big \|_p & r = \infty. 
\end{array}
\right.
\end{align}
For $p=\infty$, we define 
\begin{align}
\| f \|_{\dot F^{s}_{\infty,r}} = 
\| f \|_{\dot F^{s}_{\infty,r}(\R^3)} := 
\bigg \{ 
\sup_{Q_{z,j} \in \mathcal{Q}} 
\frac{1}{|Q_{z,j}|} \int_{Q_{z,j}} \sum_{q=j}^\infty (\lambda_q^s |f_q(x)|)^r \, dx
\bigg \}^\frac{1}{r},
\end{align}
where $\mathcal{Q} \equiv \{ Q_{z,j}; (x_1,x_2,x_3) \in Q_{z,j}, \,\, 
\lambda_j^{-1} z_i \leqq x_i \leqq \lambda_j^{-1} (z_i + 1), \,\,
j \in \Z, \,\, z=(z_1,z_2,z_3) \in \Z^3 \}$. 
We also use the equivalent norms: 
\begin{align}
\| f \|_{\dot B^{-1}_{\infty,\infty}} \sim \sup_{0<t} t^\frac{1}{2} \| e^{t \Delta} f \|_\infty, 
\end{align}
\begin{align}
\| f \|_{BMO^{-1}} \sim \| f \|_{\dot F^{-1}_{\infty,2}} \sim 
\sup_{x\in \R^3, \,\, 0<t} \bigg \{ \frac{1}{|B(x,\sqrt{t})|}  \int_{B(x,\sqrt{t})} \int^{t}_0 |e^{s\Delta} f(y)|^2 \,ds dy\bigg \}^\frac{1}{2}, 
\end{align}
\begin{align}
\| f \|_{BMO^{-1}_{\sqrt{\delta}}} \sim 
\sup_{x\in \R^3, \,\, 0<t<\delta} \bigg \{ \frac{1}{|B(x,\sqrt{t})|}  \int_{B(x,\sqrt{t})} \int^{t}_0 |e^{s\Delta} f(y)|^2 \,ds dy\bigg \}^\frac{1}{2}. 
\end{align}
For more detail on the properties of the function spaces, see Bahouri, Chemin and Danchin \cite{BCD}. 
\subsection{Settings}
First we remark that for $\varepsilon$ sufficiently smaller than $C_0^{-1}$, we have 
\begin{align}
\delta = \delta(\varepsilon, \| a \|_2^2) 
= 
\mu_0^{-2} \| a \|_2^2 e^{-\frac{1}{C_0 \varepsilon}}
= 
T_* e^{-\frac{1}{C_0 \varepsilon}} 
< T_*.
\end{align}
Now for $0<\delta<T_*$, we now introduce a Banach space $X=X_{T_*,\delta}$ with the norm: 
\begin{align}
\| u \|_X := \max ( \| u \|_{0,T_*}, \llbracket u \rrbracket_{\delta} ), 
\end{align} 
where 
\begin{align}
\|u\|_{0,T_*} := \sup_{0<t<T_*} t^\frac{1}{2} \| u(t) \|_\infty, 
\end{align}
and
\begin{align}
\llbracket u \rrbracket_\delta := \sup_{x\in \R^3, \,\, 0<t<\delta} \bigg \{ \frac{1}{|B(x,\sqrt{t})|}  \int_{B(x,\sqrt{t})} \int^{t}_0 |u(y,s)|^2 \,ds dy\bigg \}^\frac{1}{2}. 
\end{align}
Throughout the proof we will also use  
\begin{align}
\|u\|_{0,\delta} := \sup_{0<t<\delta} t^\frac{1}{2} \| u(t) \|_\infty, 
\qquad
\|u\|_{\delta,T_*} := \sup_{\delta<t<T_*} t^\frac{1}{2} \| u(t) \|_\infty. 
\end{align}
\subsection{Nonlinear Estimates} For functions $u,v: \R^3\times(0,T) \to \R^3$, we define 
\begin{equation}
N(u,v)(x,t) = \int^t_0 e^{(t-s)\Delta} \Proj \nabla \cdot (u\otimes v) (x,s) \, ds 
= \int^t_0 \int_{\R^3} K(t-s,x-y) (u\otimes v) (y,s) \, dy ds. 
\end{equation}
For the kernel $K$, we have the following point-wise estimate: 
\begin{Proposition}[\cite{KochTataru}] \label{kernel}
For $x\in \R^3$ and $t>0$, we have 
\begin{align}
|K(x,t)| \leqq C_1 (\sqrt{t} + |x|)^{-4}, 
\end{align}
where $C_1$ is an absolute constant. 
\end{Proposition}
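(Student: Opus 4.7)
The plan is to exploit parabolic scaling together with an explicit kernel representation. Writing $K(t,x)$ as the inverse Fourier transform of the symbol $m_t(\xi) = i\xi \otimes (I - \xi\otimes\xi/|\xi|^2)\,e^{-t|\xi|^2}$ and making the substitution $\xi \mapsto \xi/\sqrt{t}$ yields the dilation identity $K(t,x) = t^{-2}K(1, x/\sqrt{t})$. Hence the proposition reduces to a dimensionless bound $|K(1,y)| \leq C(1+|y|)^{-4}$ for every $y \in \R^3$, after which we recover the $(\sqrt{t}+|x|)^{-4}$ estimate by undoing the scaling.

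Next I would decompose the Leray projector via the Newtonian potential $N(x) = -1/(4\pi|x|)$ as $\Proj = I + \nabla\otimes\nabla \,(-\Delta)^{-1}$. Then $e^{t\Delta}\Proj\nabla\cdot$ splits into two pieces whose kernels are $\nabla G_t(x)$ and $\nabla^3 \Phi_t(x)$, where $G_t$ is the standard heat kernel and $\Phi_t := N * G_t$. The Gaussian piece satisfies $|\nabla G_t(x)| \lesssim t^{-2}e^{-c|x|^2/t}$, which majorizes the claimed bound trivially, so all the work is in estimating $\nabla^3 \Phi_t$.

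For $\nabla^3 \Phi_t$ I would split cases. In the near field $|x|\leq 2\sqrt{t}$, I would differentiate under the integral sign in the Fourier representation of $\Phi_t$ to obtain the size bound $|\nabla^3\Phi_t(x)|\lesssim t^{-5/2}$, which is exactly $(\sqrt{t}+|x|)^{-4}$ up to constants in that regime. In the far field $|x|\geq 2\sqrt{t}$, I would use that $G_t$ has essentially compact support on the scale $\sqrt{t}\ll |x|$, write
\[
\Phi_t(x) = \int_{\R^3} N(x-y)\, G_t(y)\, dy,
\]
and Taylor-expand $N$ at $x$ on the effective support of $G_t$. Since $|\nabla^3 N(z)| \lesssim |z|^{-4}$ for $z\ne 0$, this expansion (together with the rapid Gaussian decay that controls the remainder) produces the bound $|\nabla^3 \Phi_t(x)|\lesssim |x|^{-4}$, completing the required estimate.

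The main obstacle I expect is the far-field regime for $\nabla^3\Phi_t$: the Newtonian potential is only marginally integrable at infinity, so one must exploit the Gaussian localization of $G_t$ rather than try to estimate $N$ and $G_t$ in isolation. A direct integration-by-parts argument on the Fourier side, i.e.\ $y^\alpha K(1,y) = \mathcal F^{-1}[\partial_\xi^\alpha m_1](y)$ with $|\alpha|=4$, is not available because $\partial_\xi^\alpha(\xi_j\xi_k\xi_l/|\xi|^2)$ generates a non-integrable $|\xi|^{-3}$ singularity at the origin; this is precisely why the physical-side decomposition through $\Phi_t = N*G_t$ is the natural route to the sharp $|x|^{-4}$ tail.
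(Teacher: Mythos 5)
The paper itself offers no proof of this proposition: it is imported verbatim from Koch--Tataru, and the bound is the classical Oseen kernel estimate. The route you propose is precisely the standard proof of that estimate: parabolic scaling $K(t,x)=t^{-2}K(1,x/\sqrt{t})$ to reduce to the dimensionless bound $|K(1,y)|\leq C(1+|y|)^{-4}$; the splitting of $e^{t\Delta}\Proj\nabla\cdot$ into the Gaussian piece $\nabla G_t$ and the nonlocal piece $\nabla^3\Phi_t$ with $\Phi_t=N*G_t$ ($N$ the Newtonian potential --- not to be confused with the paper's bilinear term $N(u,v)$); a Fourier size bound in the near field; and Gaussian localization in the far field. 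Your closing observation is also correct and is exactly why the proof is organized this way: four-fold integration by parts on the Fourier side fails because $\partial_\xi^\alpha(\xi_j\xi_k\xi_l/|\xi|^2)$ with $|\alpha|=4$ has a non-integrable $|\xi|^{-3}$ singularity at the origin.

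Two steps need repair. First, a small one: differentiating under the integral gives $\sup_x|\nabla^3\Phi_t(x)|\lesssim\int_{\R^3}|\xi|\,e^{-t|\xi|^2}\,d\xi\sim t^{-2}$, not $t^{-5/2}$; and since $(\sqrt{t}+|x|)^{-4}\sim t^{-2}$ on $|x|\leq 2\sqrt{t}$, it is $t^{-2}$ that you need ($t^{-5/2}$ would be insufficient for small $t$, though after your reduction to $t=1$ the slip is harmless). Second, and more substantively: your far-field argument keeps all three derivatives on $N$, i.e.\ estimates $\int\nabla^3N(x-y)\,G_t(y)\,dy$ by Taylor expansion plus ``Gaussian decay for the remainder.'' As written this fails: $|\nabla^3N(z)|\sim|z|^{-4}$ is not locally integrable in $\R^3$, so the contribution from a neighborhood of $y=x$ is a literally divergent integral, and exponential smallness of $G_t$ there multiplies infinity --- decay cannot rescue a non-integrable local singularity. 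The standard fix is a smooth cutoff: write $G_t=\chi G_t+(1-\chi)G_t$ with $\chi$ supported in $\{|y|\leq|x|/2\}$. In $\nabla^3\bigl(N*(\chi G_t)\bigr)(x)$ the derivatives stay on $N$, the singularity is avoided since $|x-y|\geq|x|/2$, and $\int G_t\leq 1$ gives the bound $|x|^{-4}$; in $\nabla^3\bigl(N*((1-\chi)G_t)\bigr)(x)$ you move all derivatives onto the second factor, where $N$ is locally integrable and $|\nabla^3[(1-\chi)G_t]|$ carries a factor $e^{-c|x|^2/t}$ on its support (cutoff-derivative terms are harmless), yielding a bound $\lesssim t^{-2}e^{-c|x|^2/t}\lesssim|x|^{-4}$ for $|x|\geq 2\sqrt{t}$. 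With these two corrections your argument is complete and coincides with the classical proof behind the paper's citation.
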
 
Now, we prove the key estimate for the nonlinear term.
\begin{Lemma} \label{nonlinear estimate 1}
For $u,v \in X$, we have 
\begin{align} \label{NLE}
\| N(u,v) \|_{\delta,T_*} 
\leqq &\,
C (\| u \|_{0,\delta} + \llbracket u \rrbracket_{\delta}) 
(\| v \|_{0,\delta} + \llbracket v \rrbracket_{\delta}) \\
&+ 
C_1  \ln \bigg ( \frac{1+\sqrt{1-\delta/T_*}}{1-\sqrt{1-\delta/T_*}} \bigg ) 
\|u\|_{\delta,T_*}\|v\|_{\delta,T_*}, 
\end{align}
where $C$ is an absolute constant. 
\end{Lemma}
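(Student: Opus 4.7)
The plan is to decompose the Duhamel integral in time at the threshold $s=\delta$, writing $N(u,v)(x,t) = I_1(x,t) + I_2(x,t)$ where $I_1$ integrates over $s\in(0,\delta)$ and $I_2$ over $s\in(\delta,t)$. The piece $I_1$ will produce the first term in the bound via the Koch--Tataru bilinear estimate applied to time-truncated data, while $I_2$ will produce the logarithmic term via a direct pointwise computation.

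For $I_2$, since $s>\delta$ I would bound pointwise $|u(y,s)||v(y,s)| \leq s^{-1}\|u\|_{\delta,T_*}\|v\|_{\delta,T_*}$, apply Proposition~\ref{kernel}, and use the elementary identity $\int_{\R^3}(\sqrt{t-s}+|x-y|)^{-4}\,dy = \tfrac{4\pi}{3}(t-s)^{-1/2}$, reducing matters to evaluating
\[
\int_\delta^t \frac{ds}{s\sqrt{t-s}}.
\]
The substitutions $s=t\sigma$ and then $\tau=\sqrt{1-\sigma}$ yield the closed form $\tfrac{1}{\sqrt{t}}\ln\tfrac{1+\sqrt{1-\delta/t}}{1-\sqrt{1-\delta/t}}$. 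This expression is monotonically increasing in $t$, so multiplying by $t^{1/2}$ and taking the supremum over $t\in(\delta,T_*)$ yields precisely the logarithmic factor stated in the lemma.

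For $I_1$, I would introduce the time-truncated fields $\tilde u(\cdot,s) = u(\cdot,s)\chi_{(0,\delta)}(s)$ and $\tilde v$ analogously, so that $I_1(x,t) = N(\tilde u,\tilde v)(x,t)$ for all $t>\delta$. The standard Koch--Tataru bilinear estimate, whose proof depends only on the kernel bound in Proposition~\ref{kernel} and Carleson-measure arguments, gives
\[
t^{1/2}\|N(\tilde u,\tilde v)(t)\|_\infty \leq C\big(\|\tilde u\|_{0,\infty}+\llbracket \tilde u\rrbracket\big)\big(\|\tilde v\|_{0,\infty}+\llbracket \tilde v\rrbracket\big),
\]
where $\llbracket\cdot\rrbracket$ denotes the full (all-scale) Carleson norm. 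Trivially $\|\tilde u\|_{0,\infty}=\|u\|_{0,\delta}$. To control $\llbracket \tilde u\rrbracket$ at a scale $\sqrt r$ with $r>\delta$, I would cover $B(x,\sqrt r)$ by at most $C(r/\delta)^{3/2}$ balls of radius $\sqrt\delta$ and apply the restricted Carleson bound $\llbracket u\rrbracket_\delta$ on each (noting that $\tilde u$ vanishes for $s\geq\delta$, so the time integral effectively runs only up to $\delta$), obtaining $\llbracket \tilde u\rrbracket \leq C\llbracket u\rrbracket_\delta$. Combining these two inputs produces the first term of the lemma.

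The principal obstacle I anticipate is invoking (or, if needed, re-deriving in the present setting) the Koch--Tataru bilinear estimate for the time-truncated data, together with the covering argument that controls $\llbracket \tilde u\rrbracket$ at scales above $\sqrt\delta$ by the restricted norm $\llbracket u\rrbracket_\delta$. The $I_2$ computation is elementary but must be executed precisely to match the stated logarithmic form and the exact constant $C_1$.
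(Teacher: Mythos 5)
Your proposal is correct, and your treatment of the $s\in(\delta,t)$ piece is exactly the paper's: the same pointwise bound $|u(y,s)||v(y,s)|\leqq s^{-1}\|u\|_{\delta,T_*}\|v\|_{\delta,T_*}$, the same kernel integration yielding $(t-s)^{-1/2}$, the same closed-form evaluation of $t^{1/2}\int_\delta^t s^{-1}(t-s)^{-1/2}\,ds = F(\delta/t)$, and the same monotonicity step giving $F(\delta/T_*)$. (Your observed constant discrepancy --- the $y$-integral produces $\tfrac{4\pi}{3}C_1$ rather than $C_1$ --- is present in the paper's own proof as well, which silently absorbs the $\tfrac{4\pi}{3}$; since the fixed-point argument only needs $C_0 \ll C_1$, this is harmless in both treatments.) Where you genuinely diverge is the $(0,\delta)$ piece. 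The paper argues by hand: it further splits time at $\delta/2$ --- precisely so that for $s<\delta/2$ and $t>\delta$ one has $t-s>t/2$ and the kernel is uniformly $\lesssim t^{-2}$ on the near region, while the leftover strip $(\delta/2,\delta)$ is killed by the sup-norms with the explicit constant $\ln(3+2\sqrt{2})$ --- and then decomposes space into the cube $Q(x,\sqrt t)$ plus dyadic-type shells (with $n^{-4}$ kernel decay against $\sim n^2$ cubes per shell), covering everything by $\sim(\sqrt t/\sqrt\delta)^3$ balls of radius $\sqrt\delta$ and applying Cauchy--Schwarz against $\llbracket\cdot\rrbracket_\delta$ on each ball. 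You instead package the same geometric idea at the level of norms: truncate $\tilde u = u\chi_{(0,\delta)}$, prove the transfer estimate $\llbracket\tilde u\rrbracket \leqq C\llbracket u\rrbracket_\delta$ by the covering count $(r/\delta)^{3/2}$, and invoke the Koch--Tataru bilinear estimate as a black box (which also handles the kernel singularity near $s=t$ that forces the paper's $\delta/2$ split). Your route is shorter and more modular --- and its use of Koch--Tataru machinery is consistent with how the paper itself treats Lemma \ref{nonlinear estimate 2}, which is simply cited to \cite{KochTataru} --- while the paper's route is self-contained, requiring no truncated version of the bilinear estimate; your norm-transfer lemma $\llbracket\tilde u\rrbracket \lesssim \llbracket u\rrbracket_\delta$ is the clean abstraction of the covering step the paper performs inside the kernel estimate, and it is arguably worth recording separately since it is reused implicitly in both $I_{2,1}$ and $I_{2,2}$ of the paper's argument.
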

\begin{Remark}
One of the differences between \eqref{NLE} and the approach in \cite{KochTataru} is a decomposition into frequencies below and above $\frac{1}{\sqrt{\delta}}$. We also introduce a new quantity (the logarithmic part in the R.H.S of \eqref{NLE}) coming from the incomplete Beta function to estimate $N(u,u)$ by the frequency localized $BMO^{-1}$ norm.
Moreover, on the time interval $\delta<t<T_*$, we cover the  spacial region by smaller cubes than in \cite{KochTataru} to avoid a singularity coming from $\delta \to 0$. 
\end{Remark}
\begin{proof}
Let $\delta<t<T_*$, $x \in \R^3$. 
We split $N(u,v)(x,t)$ into 
\begin{align}
I_1 &= \int^\delta_{\delta/2} \int_{\R^3} K(t-s,x-y) (u\otimes v)(y,s)\, dyds.\\
I_2 &= \int^{\delta/2}_0 \int_{\R^3} K(t-s,x-y) (u\otimes v)(y,s)\, dyds,\\
I_3 &= \int^t_\delta \int_{\R^3} K(t-s,x-y) (u\otimes v)(y,s)\, dyds. 
\end{align}
For $I_1$, by Proposition \ref{kernel}, we have 
\begin{align}
t^\frac{1}{2}|I_1| 
\leqq 
C_1 
t^\frac{1}{2} \int^\delta_{\delta/2} s^{-1} (t-s)^{-\frac{1}{2}} \, ds \, 
\| u \|_{0,\delta}  \| v \|_{0,\delta}\leqq C_1 \ln(3+2\sqrt{2}) \| u \|_{0,\delta}  \| v \|_{0,\delta}. 
\end{align}
For $I_2$, we also split into 
\begin{align}
I_{2,1} &= \int^{\delta/2}_0 \int_{Q(x,\sqrt{t})} K(t-s,x-y) (u\otimes v)(y,s)\, dyds,\\
I_{2,2} &= \int^{\delta/2}_0 \int_{Q(x,\sqrt{t})^c} K(t-s,x-y) (u\otimes v)(y,s)\, dyds,
\end{align}
where $Q(x,\sqrt{t})$ is a cube with a side length of $2\sqrt{t}$ centered at $x$. For $I_2$, it follows from Proposition \ref{kernel} that 
\begin{align}
t^\frac{1}{2}|I_{2,1}| 
&\leqq C t^\frac{1}{2} \int^{\delta/2}_0 \int_{Q(x,\sqrt{t})}  \frac{|u(y,s)| |v(y,s)|}{\sqrt{t}^{4}} \,dy ds  \\
&\leqq \frac{C\sqrt{\delta}^3}{\sqrt{t}^{3}} 
\sum_{z \in A(x,\sqrt{t})} \frac{1}{|B(z,\sqrt{\delta})|} 
\int^{\delta/2}_0 
\int_{B(z,\sqrt{\delta})} |u(y,s)| |v(y,s)| \,dy ds  \\
&\leqq 
C
\llbracket u \rrbracket_\delta \llbracket v  \rrbracket_\delta, 
\end{align}
where $A(x,\sqrt{t}) \subset \mathbb{Z}^3
$ is such that $\{B(z,\sqrt{\delta})\}_{z \in A(x,\sqrt{t})}$ covers 
$Q(x,\sqrt{t})$ and $|A(x,\sqrt{t})| \sim (\sqrt{t}/\sqrt{\delta} )^3$. Here, $|A(x,\sqrt{t})|$ denotes the number of the elements of the set $A(x,\sqrt{t})$. 
For $I_{2,2}$, we have 
\begin{align}
t^\frac{1}{2} |I_{2,2}| 
\leqq 
C_1 t^\frac{1}{2} \sum_{n=2}^\infty \int^{\delta/2}_0 \int_{Q(x,(n+1)\sqrt{t}/2)\backslash Q(x,n\sqrt{t}/2)} \frac{|u(y,s)||v(y,s)|}{(n\sqrt{t})^{4}} \,dy ds. \\
\end{align}
Now, let $L(x,n\sqrt{t}/2)$ be the set of the lattice point on the surface of $Q(x,n\sqrt{t}/2)$ such that $\{Q(z,\sqrt{t})\}_{z \in L(x,n\sqrt{t}/2)}$ covers $Q(x,(n+1)\sqrt{t}/2)\backslash Q(x,n\sqrt{t}/2)$ and $| \mathcal{L}(x,n\sqrt{t}/2)| \sim n^2$. Then we have 
\begin{align}
t^\frac{1}{2} |I_{2,2}| 
&\leqq 
\frac{C}{\sqrt{t}^{3}} \sum_{n=2}^\infty 
\sum_{z \in L(x,n\sqrt{t}/2)} 
\frac{1}{n^4} \int^{\delta/2}_0 
\int_{{Q(z,\sqrt{t})}} |u(y,s)||v(y,s)| \,dy ds\\
&\leqq 
\frac{C \sqrt{\delta}^3}{\sqrt{t}^{3}} \sum_{n=2}^\infty 
\sum_{z \in L(x,n\sqrt{t}/2)} 
\sum_{w \in A(z,\sqrt{t})} 
\frac{1}{n^4 |B(w,\sqrt{\delta})|} \int^{\delta/2}_0 
\int_{{B(w,\sqrt{\delta})}} |u(y,s)||v(y,s)| \,dy ds \\
&\leqq 
\frac{C \sqrt{\delta}^3}{\sqrt{t}^{3}}
\cdot 
\frac{\sqrt{t}^3 \llbracket u \rrbracket_\delta \llbracket v \rrbracket_\delta}{\sqrt{\delta}^3}
\sum_{n=2}^\infty  
\frac{1}{n^2}. 
\end{align}
For $I_3$, we have 
\begin{align}
t^\frac{1}{2}|I_3| 
&\leqq 
C_1 
\|u\|_{\delta,T_*}\|v\|_{\delta,T_*} t^\frac{1}{2} \int^t_\delta s^{-1} (t-s)^{-\frac{1}{2}} \, ds = 
C_1 F(\delta/t) \|u\|_{\delta,T_*}\|v\|_{\delta,T_*}, 
\end{align}
where the above incomplete beta function is equal to $F$ given by 
\begin{align}
F(\delta/t) \equiv \ln \bigg ( \frac{1+\sqrt{1-\delta/t}}{1-\sqrt{1-\delta/t}} \bigg ). 
\end{align}
Since $F(\tau)$ is monotonically decreasing on $(0,1]$, we have $F(\delta/t) \leqq F(\delta/T_*)$. 
This completes the proof of Lemma \ref{nonlinear estimate 1}. 
\end{proof}
\par 
Similarly, we the following estimates hold (see also Koch--Tataru \cite{KochTataru}). 
\begin{Lemma} \label{nonlinear estimate 2}
For $u,v \in X$, we have 
\begin{align}
\| N(u,v) \|_{0,\delta} \leqq 
C (\| u \|_{0,\delta} + \llbracket u \rrbracket_\delta)
(\| v \|_{0,\delta} + \llbracket v \rrbracket_\delta), 
\end{align}
\begin{align}
\llbracket  N(u,v) \rrbracket_\delta \leqq 
C (\| u \|_{0,\delta} + \llbracket u \rrbracket_\delta)
(\| v \|_{0,\delta} + \llbracket v \rrbracket_\delta),  
\end{align}
where $C$ is an absolute constant. 
\end{Lemma}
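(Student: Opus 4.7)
The plan is to follow the Koch--Tataru bilinear argument \cite{KochTataru}, adapted to the present setting where all variables are restricted to the time interval $(0,\delta)$. The key simplification compared with Lemma~\ref{nonlinear estimate 1} is that every $t$ considered satisfies $\sqrt{t}<\sqrt{\delta}$, so all covering balls in the Carleson-type arguments have radius $<\sqrt{\delta}$ and the truncated norm $\llbracket\cdot\rrbracket_\delta$ applies uniformly. In particular, the outer interval $(\delta,T_*)$ that produced the logarithmic correction in Lemma~\ref{nonlinear estimate 1} does not appear, so both estimates come out with absolute constants.

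For the first estimate, I would fix $0<t<\delta$ and $x\in\R^3$ and split the time integral at $s=t/2$. On the late piece $(t/2,t)$, the mapping bound $\|e^{(t-s)\Delta}\Proj\nabla\cdot\|_{\infty\to\infty}\lesssim (t-s)^{-1/2}$ together with $\|u(s)\|_\infty\lesssim s^{-1/2}\|u\|_{0,\delta}$ (and similarly for $v$) yields a contribution $\lesssim t^{-1/2}\|u\|_{0,\delta}\|v\|_{0,\delta}$. On the early piece $(0,t/2)$, I use Proposition~\ref{kernel} with $\sqrt{t-s}\sim\sqrt{t}$ and decompose space into $Q(x,\sqrt{t})$ and the dyadic annuli $Q(x,2^{k+1}\sqrt{t})\setminus Q(x,2^k\sqrt{t})$, covering each annulus by $O(2^{3k})$ cubes of side $\sqrt{t}<\sqrt{\delta}$ and applying Cauchy--Schwarz in spacetime on each cube to bring out $\llbracket u\rrbracket_\delta\llbracket v\rrbracket_\delta$. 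The geometric series $\sum_k 2^{-k}$ converges, which mirrors exactly the treatment of $I_{2,1}$ and $I_{2,2}$ in the proof of Lemma~\ref{nonlinear estimate 1}.

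For the Carleson estimate, fix $x_0\in\R^3$ and $0<T<\delta$ and decompose
\[
u\otimes v \;=\; \chi\,(u\otimes v) + (1-\chi)\,(u\otimes v),
\]
where $\chi$ is the characteristic function of the enlarged parabolic cylinder $B(x_0,4\sqrt{T})\times(0,T)$, inducing $N(u,v)=N_{\text{loc}}+N_{\text{far}}$. For $N_{\text{far}}$, any $(y,s)\in B(x_0,\sqrt{T})\times(0,T)$ is spatially separated from $\mathrm{supp}\,(1-\chi)(u\otimes v)$ by at least $2\sqrt{T}$, so Proposition~\ref{kernel} yields a safe pointwise bound of the form $|N_{\text{far}}(y,s)|\lesssim T^{-1/2}\llbracket u\rrbracket_\delta\llbracket v\rrbracket_\delta$, obtained after a dyadic decomposition at scales $\geq\sqrt{T}$ with each annulus covered by balls of radius $\sqrt{T}<\sqrt{\delta}$. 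The $L^2$-average of this bound over the cylinder has the desired form. For $N_{\text{loc}}$, I would appeal to the classical $L^2_{t,x}$-boundedness of the parabolic singular integral $f\mapsto\int_0^t e^{(t-s)\Delta}\Proj\nabla\cdot f(s)\,ds$ to reduce to bounding $\|\chi(u\otimes v)\|_{L^2(\R^3\times(0,T))}^2$ by $(\|u\|_{0,\delta}+\llbracket u\rrbracket_\delta)^2(\|v\|_{0,\delta}+\llbracket v\rrbracket_\delta)^2\,|B(x_0,\sqrt{T})|$, after covering $B(x_0,4\sqrt{T})$ by $O(1)$ balls of radius $\sqrt{T}<\sqrt{\delta}$.

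The main obstacle I anticipate is precisely the last step: a naive pairing $\|u(\tau)\|_\infty\cdot|v(\tau)|$ inside $\|\chi(u\otimes v)\|_{L^2}^2$ produces the divergent weight $\int_0^T\tau^{-1}d\tau$. Overcoming this requires a dyadic time splitting (or, equivalently, an interpolated $L^4_{t,x}$-type bound $\|u\|_{L^4(\text{cylinder})}\lesssim\|u\|_{0,\delta}^{1/2}\llbracket u\rrbracket_\delta^{1/2}$), which is in spirit the device used by Koch and Tataru. Once this pairing is in place, the remainder of the argument is a routine adaptation of \cite{KochTataru}, kept clean by the single hypothesis $T<\delta$ which guarantees that every ball appearing in the covering arguments lies within the range of $\llbracket\cdot\rrbracket_\delta$.
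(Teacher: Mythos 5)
Your treatment of the first estimate is correct and is essentially the paper's own approach: for $t<\delta$ the paper's proof of Lemma \ref{nonlinear estimate 1} restricts to the analogues of $I_1$, $I_{2,1}$, $I_{2,2}$ (split at $s=t/2$, sup norms on the late piece, kernel decay plus a covering by cubes of side $\sim\sqrt{t}<\sqrt{\delta}$ on the early piece), the $I_3$-term is absent, and no logarithm appears. Note the paper gives no separate proof of Lemma \ref{nonlinear estimate 2} at all --- it says ``similarly \dots (see also Koch--Tataru)'' --- so the real question is whether your reconstruction of the Carleson estimate matches \cite{KochTataru}. Your far part does (kernel decay, dyadic annuli, balls of radius $\leqq \sqrt{T}<\sqrt{\delta}$).

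The genuine gap is in the local part, exactly at the step you flag. First, a bookkeeping issue: $F\mapsto\int_0^t e^{(t-s)\Delta}\Proj\nabla\cdot F\,ds$ is not bounded on $L^2_{t,x}$ (only $F\mapsto\nabla N$ is, by maximal regularity); what holds is $\int_0^T\|N_{\mathrm{loc}}(t)\|_{L^2_x}^2\,dt\lesssim T\|\chi(u\otimes v)\|_{L^2_{t,x}}^2$, so after dividing by $|B(x_0,\sqrt{T})|\sim T^{3/2}$ you need $\|\chi(u\otimes v)\|_{L^2_{t,x}}^2\lesssim T^{1/2}(\cdots)^2(\cdots)^2$, not the $T^{3/2}$-version you wrote. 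More seriously, this inequality --- equivalently the scaling-corrected form $\|u\|_{L^4(\mathrm{cyl})}\lesssim T^{1/8}\|u\|_{0,\delta}^{1/2}\llbracket u\rrbracket_\delta^{1/2}$ of your interpolation --- is \emph{false} on $X$. Take $u=v$ with
\begin{align}
u(y,s)=\alpha\, s^{-1/2}\,\mathbf{1}_E(s)\,\psi(y), \qquad E=\bigcup_{j=1}^{N}\bigl(4^{-j}T,\,2\cdot 4^{-j}T\bigr),
\end{align}
with $\psi$ a cutoff equal to $1$ on a large ball containing the cylinder and $T<\delta$. Then $\|u\|_{0,\delta}=\alpha$ and $\llbracket u\rrbracket_\delta^2\simeq N\alpha^2$ (each lacunary interval contributes $O(\alpha^2)$ to $\int_0^{r^2}s^{-1}\,ds$), whereas $\int_0^T\int_{B}|u|^4\,dy\,ds \simeq T^{3/2}\alpha^4\sum_j (4^{-j}T)^{-1}\simeq T^{1/2}\alpha^4 4^N$, which overwhelms $T^{1/2}(\|u\|_{0,\delta}+\llbracket u\rrbracket_\delta)^4\simeq T^{1/2}N^2\alpha^4$. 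So no dyadic time-splitting or interpolation can rescue a proof routed through $\|\chi(u\otimes v)\|_{L^2(\mathrm{cyl})}$: that quantity is simply not controlled by the $X$-norms, and attributing this device to Koch--Tataru is inaccurate --- since their theorem is correct, their energy argument for the local term (heat equation with divergence-form forcing, with the pairing organized so that each factor is only ever measured at its own time scale; see also the later tent-space proofs) necessarily avoids bounding $\|u\otimes v\|_{L^2}$ of the full cylinder. This log-critical mechanism is the one ingredient the second estimate of the lemma actually requires, and your sketch does not supply it.
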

Next, we have the following estimate. 
\begin{Lemma} \label{energy estimate}
For $0 \leqq \alpha < 1$, we have 
\begin{align}
\sup_{0<t<T_*} t^\frac{\alpha}{2} \|\Lambda^\alpha N(u,u) \|_2 \leqq 
C_2(\alpha)
\| u \|_{0,T_*}
\sup_{0< t< T_*}
t^\frac{\alpha}{2}\| \Lambda^\alpha u(t)\|_2, 
\end{align}
where $C_2=C_2(\alpha)$ is a constant depending on $\alpha$. 
\end{Lemma}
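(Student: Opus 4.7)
The plan is to commute $\Lambda^\alpha$ past the heat semigroup, the Leray projector, and the divergence, thereby reducing everything to an $L^2$-estimate of the product $\Lambda^\alpha(u \otimes u)$. Concretely, I would write
\[
\Lambda^\alpha N(u,u)(t) = \int_0^t e^{(t-s)\Delta}\,\Proj\,\nabla\cdot\,\Lambda^\alpha(u\otimes u)(s)\,ds,
\]
and then apply the standard heat-kernel bound $\|e^{\tau\Delta}\Proj\nabla\cdot f\|_2 \le C\tau^{-1/2}\|f\|_2$ under the time integral, which gives
\[
\|\Lambda^\alpha N(u,u)(t)\|_2 \le C\int_0^t (t-s)^{-1/2}\,\|\Lambda^\alpha(u\otimes u)(s)\|_2\,ds.
\]

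The next step is a fractional Leibniz (Kato--Ponce) inequality in the form
\[
\|\Lambda^\alpha(u \otimes u)\|_2 \le C\,\|u\|_\infty\,\|\Lambda^\alpha u\|_2,
\]
which is tailored precisely so that one factor absorbs the norm $\|u\|_{0,T_*}$ while the other carries the weighted $\dot H^\alpha$ quantity appearing on the right of the lemma. Substituting the pointwise-in-time bounds $\|u(s)\|_\infty \le s^{-1/2}\|u\|_{0,T_*}$ and $\|\Lambda^\alpha u(s)\|_2 \le s^{-\alpha/2} A$, where $A := \sup_{0<s<T_*} s^{\alpha/2}\|\Lambda^\alpha u(s)\|_2$, leads to
\[
\|\Lambda^\alpha N(u,u)(t)\|_2 \le C\,\|u\|_{0,T_*}\,A\,\int_0^t (t-s)^{-1/2}\,s^{-(1+\alpha)/2}\,ds.
\]
The rescaling $s=t\tau$ identifies the time integral as $t^{-\alpha/2}\,B\!\left(\tfrac12,\tfrac{1-\alpha}{2}\right)$. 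Multiplying through by $t^{\alpha/2}$ and taking the supremum over $0<t<T_*$ then closes the estimate with $C_2(\alpha) = C\cdot B\!\left(\tfrac12,\tfrac{1-\alpha}{2}\right)$.

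The only delicate issue is the blow-up of the constant as $\alpha\uparrow 1$: the weight $s^{-(1+\alpha)/2}$ becomes non-integrable at $s=0$, and correspondingly $B\!\left(\tfrac12,\tfrac{1-\alpha}{2}\right)$ diverges at $\alpha=1$. This is exactly why the lemma restricts to $\alpha<1$, and the restriction appears intrinsic to this scheme, since any attempt to shift some of the derivative onto the semigroup factor simply trades this singularity for an equally non-integrable $(t-s)^{-(1+\alpha)/2}$ at the other endpoint. Beyond this observation, the argument is a routine Duhamel-plus-beta-function computation, simplified by the fact that both factors coincide in the Kato--Ponce step.
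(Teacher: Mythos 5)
Your proof is correct and takes essentially the same route as the paper's: Duhamel's formula with the smoothing bound $\|e^{\tau\Delta}\Proj\nabla\cdot f\|_2 \leqq C\tau^{-1/2}\|f\|_2$, the Kato--Ponce inequality $\|\Lambda^\alpha(u\otimes u)\|_2 \lesssim \|u\|_\infty\|\Lambda^\alpha u\|_2$, and the beta-function evaluation $\int_0^t (t-s)^{-1/2} s^{-(1+\alpha)/2}\,ds = t^{-\alpha/2} B\bigl(\tfrac{1}{2},\tfrac{1-\alpha}{2}\bigr)$, which is exactly the paper's constant $B\bigl(\tfrac{1}{2},\tfrac{1}{2}-\tfrac{\alpha}{2}\bigr)$. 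Your closing observation that the constant diverges as $\alpha \uparrow 1$ correctly identifies why the lemma is restricted to $0 \leqq \alpha < 1$.
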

\begin{proof}
By the Leibniz rule (see Kato--Ponce \cite{KatoPonce}), we have 
\begin{align}
\sup_{0<t<T_*} t^\frac{\alpha}{2} \| \Lambda^\alpha N(u,u) \|_2 
&\leqq 
C
\sup_{0<t<T_*} t^\frac{\alpha}{2} \int^t_{0} (t-s)^{-\frac{1}{2}} \| \Lambda^\alpha (u\otimes u)(s)\|_2 \, ds\\
&\leqq 
C(\alpha)
\sup_{0<t<T_*} t^\frac{\alpha}{2} \int^t_{0} (t-s)^{-\frac{1}{2}} \| \Lambda^\alpha u(s)\|_2 \| u(s) \|_\infty \, ds \\
&\leqq 
C(\alpha)
B\bigg ( \frac{1}{2}, \frac{1}{2} -\frac{\alpha}{2} \bigg )
\| u \|_{0,T_*}
\sup_{0 <t < T_*}
t^\frac{\alpha}{2}\| \Lambda^\alpha u(t)\|_2,
\\
\end{align}
where $B$ denotes the beta function. 
\end{proof}
\section{Proof of Theorem \ref{thm1}} 
We now introduce a map $\Phi$ as 
\begin{align}
\Phi u = e^{t\Delta} a - N(u,u). 
\end{align}
We also define a closed subset $S \subset X$ as 
\begin{align}
S \equiv \{ u \in X: 
\| u \|_X \leqq 4 \varepsilon \}. 
\end{align}
Since, as $x \to 0+$,
\[
\ln \left(  \frac{1+\sqrt{1-x}}{1-\sqrt{1-x}} \right) = -\ln x +O(1),
\]
recalling that $\delta/T_* = e^{-\frac{1}{C_0 \varepsilon}}$,
there exists an absolute constant $C_0 \ll C_1$, such that for any sufficiently small $\varepsilon>0$, it holds that 
\[
\begin{split} \label{adv}
\ln \bigg ( \frac{1+\sqrt{1-\delta/T_*}}{1-\sqrt{1-\delta/T_*}} \bigg ) &= \ln \left( \frac{1+\sqrt{1-e^{-\frac{1}{C_0\varepsilon}}}}{1-\sqrt{1-e^{-\frac{1}{C_0\varepsilon}}}} \right)\\
&< \frac{1}{16 C_1 \varepsilon}.
\end{split}
\]
Hence, 
by Lemma \ref{nonlinear estimate 1}, \ref{nonlinear estimate 2} and \eqref{adv}, we have 
\begin{align} \label{eq:mapping_estimate}
\| \Phi u \|_X \leqq 4 \varepsilon, \qquad
\| \Phi u - \Phi v \|_X \leqq \frac{3}{4} \| u - v \|_X,  
\end{align} 
for $u,v \in S$. 
This implies that $\Phi$ maps from $S$ to $S$, and it is a contraction mapping. Hence, a fixed point $u \in X$ exists with $\Phi u = u$. 
Moreover, by Lemma \ref{energy estimate}, 
for $0 \leqq \alpha < 1$ and $0<\tau\leqq T_*$, we have 
\begin{align} \label{energy}
(1 - \varepsilon C_2(\alpha)) \sup_{0 < t< T_*} t^\frac{\alpha}{2} \| \Lambda^\alpha u(t) \|_2 
\leqq 
\sup_{0<t<T_*} t^\frac{\alpha}{2} \|\Lambda^\alpha e^{t\Delta} a \|_2.
\end{align}
In particular, considering $\alpha=0$ in \eqref{energy}, we obtain $u \in L^\infty(0,T_*; L^2)$ and 
\begin{align} \label{energy jump}
\limsup_{t \to 0} \| u(t) \|_2^2 \leqq 2 \| a \|_2^2, 
\end{align}
for sufficiently small $\varepsilon>0$. 
In addition, taking $1/2 < \alpha < 1$, by a Sobolev embedding, we may choose $p$ and $q$ so that $3<p<\infty$, $2/q+3/p=1$, and 
\begin{align} \label{PS}
u \in L^q(\delta^\prime,T_*;L^p) 
\,\,\,\,
\text{for all} 
\,\,\,\,
0<\delta^\prime<\delta.
\end{align} 
Note that \eqref{PS} corresponds to the classical Ladyzhenskaya--Prodi--Serrin class. 
By Fabes--Jones--Riviere \cite[Theorem 2.1] {FabesJonesRiviere} (see Theorem \ref{FJR} in the Appendix), $u(t)$ is a weak solution on $[0,T_*]$. Moreover, it is well known that $u \in C^\infty(\mathbb{R}^3 \times (0, T^*))$  since it belongs to the Ladyzhenskaya--Prodi--Serrin class. In particular, it satisfies the energy equality
\begin{align} \label{ee}
\frac{1}{2} \| u(t) \|_2^2 +  \int^t_{t_0} \| \nabla u(s) \|_2^2 \, ds = \frac{1}{2} \| u(t_0) \|_2^2 
\,\,\,\,
\text{for all}
\,\,\,\,
0< t_0 \leqq t\leqq T_*.
\end{align}

Next we prove that $u(t)$ can be extended to a global smooth solution of the Navier-Stokes equations. Combining \eqref{energy jump} and \eqref{ee}, we have 
\begin{align} \label{H1}
\int^{T_*}_{0} \| \nabla u(s) \|_2^2 \, ds \leqq \| a \|_2^2.
\end{align}
Hence, there exists $0 < T_{**} < T_*$ such that 
\begin{align}
\| \nabla u(T_{**}) \|_{2} \leqq \mu_0.
\end{align}
Indeed, suppose to the contrary that for every $0 < T_{**} < T_*$, it holds that 
\begin{align}
\| \nabla u(T_{**}) \|_{2} > \mu_0.
\end{align}
Then, we have 
\begin{align}
\int^{T_*}_0 \| \nabla u (s) \|_{2}^2 \, ds > T_* \mu_0^2 = \| a \|_2^2,
\end{align}
a contradiction. By Theorem~\ref{tham:KF}, there exists a global smooth solution  $u' \in C^\infty (\mathbb{R}^3 \times (T_{**},\infty))$ to \eqref{NS} with $u'(T_{**})= u(T_{**})$. By strong-strong uniqueness, $u(t)=u'(t)$ on $[T_{**},T_*]$. Therefore, we can extend $u(t)$ to the whole time interval $[0, \infty)$ defining $u(t):=u'(t)$ for $t > T_{**}$.

\par
Next, we prove the continuity of $u(t)$ at $t=0$ in $L^2(\R^3)$. Since $u \in X$, we have 
\begin{align} \label{CL cond}
\Big \| \| u(t) \|_{L^\infty(\R^3)} \Big \|_{L^2_w(0,T_*)} \leqq 4\varepsilon \|t^{-\frac{1}{2}}\|_{L^2_w(0,T_*)} < \infty. 
\end{align} 
Hence, combining \eqref{H1} and \eqref{CL cond}, we have 
\begin{align} 
u \in L^2_w(0,T_*;L^\infty) \cap L^2(0,T_*;\dot H^1) \cap L^\infty(0,T_*;L^2).
\end{align} 
By a result of Cheskidov--Luo \cite[Theorem 1.3]{CheskidovLuoE}, 
it holds that 
\begin{align}
\frac{1}{2} \| u(t) \|_2^2 + \nu \int^t_0 \| \nabla u(s) \|_2^2 \, ds = \frac{1}{2} \| a \|_2^2 
\,\,\,\,
\text{for}
\,\,\,\,
0 \leqq t\leqq T_*.
\end{align}
This implies the continuity of $u(t)$ at $t=0$ in $L^2$. 
\section{Example of initial data with large $BMO^{-1}$ norm}
In this section, we construct a family of initial data to demonstrate that the assumption of Theorem \ref{thm1} is weaker compared to \cite{KochTataru}. Recall that $a_{>q_0}$ and $a_{\leqq q_0}$ denote Littlewood--Paley projections on frequencies above and bellow $\lambda_{q_0}:=2^{q_0}$ respectively.

\begin{Theorem} \label{Thm:example}
There is an absolute constant $C>0$ and a function $E_0(\varepsilon,M)$ with 
\[
\lim_{\varepsilon \to 0} E_0(\varepsilon,M) =0,
\]
when $M\leqq C$,
such that for any $\varepsilon>0$, $M>0$, $E>E_0(\varepsilon,M)$, and
\[
\delta = \mu_0^{-2} E e^{-\frac{1}{C_0 \varepsilon}},
\]
there exists divergence-free, real-valued $a \in L^2(\R^3)$ such that
\[
\|a\|_{L^2}^2=E, \qquad \|a \|_{\dot B^{-1}_{\infty,\infty}} < \varepsilon,  \qquad a_{>\frac{1}{\sqrt{\delta}}} =0,  \qquad  \text{but} \qquad \| a \|_{BMO^{-1}} > M.
\]
\end{Theorem}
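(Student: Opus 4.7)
My plan is to exploit the fact that the Triebel--Lizorkin characterization $BMO^{-1}\sim\dot F^{-1}_{\infty,2}$ aggregates Littlewood--Paley blocks in $\ell^2$, whereas $\dot B^{-1}_{\infty,\infty}$ merely takes their supremum. Thus a sum of $N$ co-located frequency blocks of comparable Besov size should produce a factor $\sqrt{N}$ gain in $BMO^{-1}$, and pushing $N$ as large as allowed by the $L^2$ and frequency constraints will yield the desired counterexample.

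Explicitly, I would fix a real radial bump $\chi\in C^\infty_0(\R^3)$ with $\operatorname{supp}\chi\subset B(0,1)$ and $\chi(0)=1$, a unit vector $\xi_0\in\R^3$, and for each integer $q$ a unit vector $\mathbf{e}_q\perp\xi_0$; then for integers $q_1<q_2$ (to be determined) with $\lambda_{q_2}\le 1/\sqrt{\delta}$, set
\[
a(x):=\Proj\,\varepsilon\sum_{q=q_1}^{q_2}\lambda_q\,\mathbf{e}_q\,\chi(\lambda_{q_1}x)\cos(\lambda_q\xi_0\cdot x).
\]
This is manifestly real. Since $\chi(\lambda_{q_1}\cdot)$ is spectrally concentrated near frequency $\lambda_{q_1}$ while the carrier $\cos(\lambda_q\xi_0\cdot x)$ oscillates at frequency $\lambda_q\gg\lambda_{q_1}$, the $q$-th Littlewood--Paley block $a_q$ is, up to Schwartz tails, the $q$-th summand, satisfying $\|a_q\|_\infty\lesssim\varepsilon\lambda_q$ with Fourier support in $\{|\xi|\sim\lambda_q\}$. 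This at once gives $\|a\|_{\dot B^{-1}_{\infty,\infty}}\lesssim\varepsilon$ and $a_{>1/\sqrt{\delta}}=0$. Using near-orthogonality,
\[
\|a\|_{L^2}^2\sim\varepsilon^2\sum_{q=q_1}^{q_2}\lambda_q^2\lambda_{q_1}^{-3}\sim\varepsilon^2\lambda_{q_2}^2\lambda_{q_1}^{-3}\sim\varepsilon^2\,2^{3N}\sqrt{\delta},
\]
where $N:=q_2-q_1$, and the identity $\|a\|_2^2=E$ then determines $N$. On the cube $Q_{0,q_1}$ of side $\lambda_{q_1}^{-1}$ centered at the origin, $|\chi(\lambda_{q_1}x)|\gtrsim 1$ and each $\cos^2$ averages to a constant, so
\[
\|a\|_{BMO^{-1}}^2\gtrsim\frac{1}{|Q_{0,q_1}|}\int_{Q_{0,q_1}}\sum_{q=q_1}^{q_2}\lambda_q^{-2}|a_q(x)|^2\,dx\gtrsim\varepsilon^2 N.
\]

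To finish, I would combine these constraints with $\delta=\mu_0^{-2}Ee^{-1/(C_0\varepsilon)}$, which yields $2^{3N}\sim\mu_0\sqrt{E}\,\varepsilon^{-2}e^{1/(2C_0\varepsilon)}$; hence $N\gtrsim 1/(C_0\varepsilon)$ to leading order, with a logarithmic-in-$E$ correction. The target $\varepsilon\sqrt{N}>M$ is then met for $E$ exceeding an explicit threshold $E_0(\varepsilon,M)$, and when $M$ lies below a suitable absolute constant $C$ the $1/\varepsilon$ contribution to $N$ alone suffices, forcing $E_0(\varepsilon,M)\to 0$ as $\varepsilon\to 0$.

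The hard part will be the parameter balancing in this final step: because $\delta$ itself depends on $E$, the identity $\|a\|_{L^2}^2=E$ is implicit, and care is needed to extract the threshold $E_0(\varepsilon,M)$ with the claimed asymptotic behavior for $M\le C$. Minor technical points -- making $a$ simultaneously divergence-free and real-valued while preserving the block amplitudes, and controlling the Schwartz tails between neighboring Littlewood--Paley projections -- are handled by noting that $\Proj$ is bounded on the critical spaces in question and preserves both properties of the (already real) sum.
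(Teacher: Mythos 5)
Your construction is essentially the paper's own: the paper likewise takes a lacunary sum of $N=q_1-q_0$ blocks at frequencies $\lambda_q$ with amplitudes proportional to $\varepsilon\lambda_q$, all sharing a common physical-space envelope at the lowest-frequency scale, estimates the Besov norm blockwise, gets the $BMO^{-1}$ lower bound from the $\ell^2$ aggregation in $\dot F^{-1}_{\infty,2}$ on the single cube of side comparable to the envelope scale, and observes that the $L^2$ norm is dominated by the top block, $\|a\|_{L^2}^2\sim\varepsilon^2\lambda_{q_2}^2\lambda_{q_1}^{-3}$. The only structural difference is cosmetic: the paper builds divergence-freeness directly on the Fourier side via the multiplier $\xi_*^\perp/|\xi_*^\perp|$ on indicators of small cubes, while you apply $\Proj$ to modulated bumps. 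One caution there: boundedness of $\Proj$ yields only upper bounds; for the $BMO^{-1}$ \emph{lower} bound you need $\Proj$ to act nearly as the identity on each block, which does follow from your choice $\mathbf{e}_q\perp\xi_0$ together with the $O(\lambda_{q_1}/\lambda_q)$ relative frequency spread, but not from boundedness alone.

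The genuine gap is in your final parameter balancing, and it is fatal as written. Your (correct) aggregation bound gives $\|a\|_{BMO^{-1}}\gtrsim\varepsilon\sqrt{N}$, and your energy identity gives $2^{3N}\sim\mu_0\sqrt{E}\,\varepsilon^{-2}e^{\frac{1}{2C_0\varepsilon}}$, hence $N=\frac{1}{6C_0\ln 2}\cdot\frac{1}{\varepsilon}+O\left(\log\frac{\sqrt{E}}{\varepsilon^2}\right)$. But then $\varepsilon\sqrt{N}\sim\sqrt{\varepsilon/(6C_0\ln 2)}\to 0$ as $\varepsilon\to 0$, so your claim that ``the $1/\varepsilon$ contribution to $N$ alone suffices'' to meet $\varepsilon\sqrt{N}>M$ is false for every fixed $M>0$, no matter how small the absolute constant $C$. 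To meet the target with a $\sqrt{N}$ gain you would need $N\gtrsim M^2/\varepsilon^2$, and then the same identity forces $\sqrt{E}\gtrsim\varepsilon^2\mu_0^{-1}2^{3M^2/\varepsilon^2}e^{-\frac{1}{2C_0\varepsilon}}\to\infty$, i.e.\ your threshold $E_0(\varepsilon,M)$ would tend to infinity, the opposite of the asymptotics in the statement. For comparison, the paper derives the same $\varepsilon\sqrt{q_1-q_0}$ lower bound but then imposes the \emph{linear} condition $q_1-q_0\geq C_1M/\varepsilon$, and it is only with a linear-in-$N$ choice that the resulting threshold $\varepsilon^2\mu_0^{-1}2^{3C_1M/\varepsilon}e^{-\frac{1}{2C_0\varepsilon}}$ tends to $0$ for $M\leq C$ with $C$ small; a linear condition, however, is compatible with $\varepsilon\sqrt{N}>M$ only when $\varepsilon\gtrsim M$. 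Your honest $\sqrt{N}$ bookkeeping thus exposes precisely the step that must be strengthened (one would need a better-than-$\ell^2$ aggregation lower bound, which this block construction does not provide); as it stands, the proposal does not establish the stated limit $\lim_{\varepsilon\to 0}E_0(\varepsilon,M)=0$.
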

This theorem provides initial data $a$ with arbitrary large energy satisfying the assumptions of Theorem \ref{thm1}. Therefore there exists global smooth solution of the 3D Navier-Stokes equations with such initial data. On the other hand, the $BMO^{-1}$ norm of $a$ can be arbitrary large.

\subsection{Settings}
Let $q_1$ be an integer satisfying $\lambda_{q_1+1}\leqq 1/\sqrt{\delta}$. We will construct
divergence-free, real-valued $a \in L^2(\R^3)$ such that 
the frequency support of $a$ is the inside of the ball $B(0,\lambda_{q_1+1}) = \{ \xi =(\xi_1,\xi_2,\xi_3); |\xi| \leqq \lambda_{q_1+1}\}$, i.e., 
$a_{>q_1+1} = 0$, and $\| a \|_{\dot B^{-1}_{\infty,\infty}} < \varepsilon$, but $\| a \|_{BMO^{-1}} > 1/\varepsilon$.

Let $q_0 \in \Z$ be a parameter determined later, 
and let $\sigma = \lambda_{q_0}$. 
Note that $\sigma \to 0$ as $q_0 \to -\infty$. 
We now define $Q_q$ and $Q_q^\prime$ as 
\[
Q_q := \{ \xi = (\xi_1,\xi_2,\xi_3); 
\,\,
|\xi_1 - \lambda_q |,
\,\,
|\xi_2|,
\,\,
|\xi_3| < \sigma/8
\}, 
\,\,\,\,
Q_q^\prime := - Q_q, 
\]
and then for $\sigma \leqq \lambda_q$, we have 
\begin{align}
\pm Q_q \subset \{ \xi = (\xi_1, \xi_2, \xi_3); \,\, | \xi_1 \mp \lambda_q |, \,\, |\xi_2|, \,\, |\xi_3| < \lambda_q/8  \} \subset \text{supp}\, \phi_q. 
\end{align} 
Then, we define 
\begin{align}
a(x) := \frac{\varepsilon}{2 C^\prime \sigma^3} \sum_{q = q_0}^{q_1} \lambda_q  \F_\xi^{-1} \bigg [ (\textbf{1}_{Q_q}(\xi) - \textbf{1}_{Q_q^\prime}(\xi)) \frac{\xi_\ast^\perp}{| \xi_\ast^\perp|} \bigg ] (x)  
\,\,\,\,
\text{for}
\,\,\,\,
x \in \R^3,  
\end{align}
where 
$\xi_\ast := (0, \xi_2, \xi_3)$, 
$\xi_\ast^\perp := (0,-\xi_3,\xi_2)$, and $C^\prime$ is an absolute constant determined later. 
\begin{proof}
We have  
\begin{align}
\F_\xi^{-1} \bigg [ (\textbf{1}_{Q_q}(\xi) - \textbf{1}_{Q_q^\prime}(\xi)) \frac{\xi_\ast^\perp}{|\xi_\ast^\perp|} \bigg ] (x) 
= 
2 \sin (\lambda_q x_1)
\bigg ( \frac{\sigma}{8} \bigg )^3
i \F^{-1}_\xi \bigg [\textbf{1}_Q (\xi) \frac{\xi_\ast^\perp}{| \xi_\ast^\perp|} \bigg ] \bigg ( \frac{\sigma x}{8} \bigg )
\,\,\,\,
\text{for}
\,\,\,\,
x \in \R^3, 
\end{align}
where $Q := \{ \xi = (\xi_1, \xi_2, \xi_3); \,\,|\xi_k| \leqq 1, \,\, k=1,2,3 \}$. 
Hence, for $q_0 \leqq q \leqq q_1$, we have 
\begin{equation} \label{eq:B_norm_estimate_example}
\lambda_q^{-1} | a_q (x) | 
\leqq  
\frac{\varepsilon}{8^3C^\prime }
\bigg \| \F^{-1}_\xi \bigg [\textbf{1}_Q (\xi) \frac{\xi_\ast^\perp}{| \xi_\ast^\perp|} \bigg ] \bigg \|_{L_x^\infty(\R^3)} 
< \varepsilon,  \qquad \forall x,
\end{equation}
where $C^\prime$ is given by 
\begin{align}
C^\prime = \bigg \| \F^{-1}_\xi \bigg [\textbf{1}_Q (\xi) \frac{\xi_\ast^\perp}{| \xi_\ast^\perp|} \bigg ] (x) \bigg \|_{L_x^\infty(\R^3)} 
\leqq \frac{1}{(2 \pi)^\frac{3}{2}} \bigg \| \textbf{1}_Q(\xi) \frac{\xi_\ast^\perp}{| \xi_\ast^\perp|} 
\bigg \|_{L^1_{\xi}(\R^3)} < \infty.
\end{align}
For $q < q_0-1$ or $q>q_1+1$, it holds that $|a_q(x)| = 0$ for all $x \in \R^3$. 
In particular, $a_{>q_1+1} = 0$. 
We then have $\| a \|_{\dot B^{-1}_{\infty,\infty}(\R^3)} < \varepsilon$. 
On the other hand, recalling that $\sigma = \lambda_{q_1}$, we note that 
\begin{align}
\|a_{\leqq q_0}\|_{BMO^{-1}} 
&= 
\bigg \{ \sup_{Q_{z,j} \in \mathcal{Q}} \frac{1}{|Q_{z,j}|} \int_{Q_{z,j}} \sum_{q = j}^{q_1} (\lambda_q^{-1} | a_q(x) |)^2 \, dx \bigg \}^\frac{1}{2} \\
&\geqq 
\frac{\varepsilon}{8^3 C^\prime} 
\bigg \{ \frac{1}{|Q_{0,q_0}|} \int_{Q_{0,-N}} \sum_{q = q_0}^{q_1} 
\bigg | \sin (\lambda_q x_1) i \F_\xi^{-1} \bigg [ \textbf{1}_{Q}(\xi) \frac{\xi_\ast^\perp}{| \xi_\ast^\perp|} \bigg ] \bigg ( \frac{\sigma x}{8} \bigg )  \bigg |^2 \, dx \bigg \}^\frac{1}{2} \\
&= 
\frac{\varepsilon }{8^{3/2} C^\prime} 
\bigg \{ 
\sum_{q = q_0}^{q_1} 
\int_{(0,1/8)^3} 
\bigg | 
\sin (8 \lambda_{q} x_1)
i \F_\xi^{-1} \bigg [ \textbf{1}_{Q}(\xi) \frac{\xi_\ast^\perp}{| \xi_\ast^\perp|} \bigg ] (x) \bigg |^2 \, dx \bigg \}^\frac{1}{2}. 
\end{align}
Now, using $\xi_* = (0,\xi_2,\xi_3)$, $x_* = (0,x_2,x_3)$, and $Q_* = \{ \xi_*; \,\, |\xi_k| \leqq 1, \,\, k=2,3 \}$, and noting  
\begin{align}
\sin (8 \lambda_{q} x_1) 
\F_\xi^{-1} \bigg [ \textbf{1}_{Q}(\xi) \frac{\xi_\ast^\perp}{| \xi_\ast^\perp|} \bigg ] (x)
=
\frac{\sin (8 \lambda_{q} x_1)}{(2 \pi)^{1/2}} 
\frac{2 \sin (x_1)}{x_1}
\F_{\xi_*}^{-1} \bigg [ \textbf{1}_{Q_*}(\xi_*) \frac{\xi_\ast^\perp}{| \xi_\ast^\perp|} \bigg] (x_*), 
\end{align}
we get 
\begin{align}
\| &a_{\leqq q_0} \|_{BMO^{-1}} \\
&\geqq 
\frac{\varepsilon }{16 \sqrt{\pi} C^\prime} 
\bigg \| \F_{\xi_*}^{-1} \bigg [ \textbf{1}_{Q_*}(\xi_*) \frac{\xi_\ast^\perp}{| \xi_\ast^\perp|} \bigg] (x_*) \bigg \|_{L^2_{x_*} ((0,\frac{1}{8})^2)}
\bigg \{
\sum_{q = q_0}^{q_1} 
\int^{\frac{1}{8}}_0 
\bigg |  
\frac{\sin (8 \lambda_{q} x_1) \sin (x_1)}{x_1} \bigg |^2 
\, d x_1 \bigg \}^\frac{1}{2}. \\
\end{align}
Since we have
\begin{align}
\sum_{q=q_0}^{q_1} 
\int_{0}^{\frac{1}{8}} \bigg | \frac{\sin(8 \lambda_{q} x_1) \sin(x_1)}{x_1} \bigg |^2 \, dx_1 & \gtrsim 
\sum_{q=q_0}^{q_1}  \int_{0}^{\frac{1}{8}} | \sin(8 \lambda_{q} x_1) |^2 \, dx_1\\
 & \gtrsim  \sum_{q=q_0}^{q_1} 1\\
&= q_1-q_0, 
\end{align}
it follows that
\[
\| a_{\leqq q_0} \|_{BMO^{-1}} > M,
\]
provided $q_1-q_0 \geq  \frac{C_1M}{\varepsilon}$,
for some absolute constant $C_1$.

Finally, we estimate the $L^2$ norm of $a$.
\[
\begin{split}
\|a\|_{L^2}^2 &= \left(\frac{\varepsilon}{2 C^\prime \sigma^3}\right)^2 \sum_{q=q_0}^{q_1} \lambda_q^2  \left\| (\textbf{1}_{Q_q}(\xi) - \textbf{1}_{Q_q^\prime}(\xi)) \frac{\xi_\ast^\perp}{| \xi_\ast^\perp|} \right\|_{L^2}^2\\
&\sim \left(\frac{\varepsilon}{2 C^\prime \sigma^3}\right)^2 \lambda_{q_1}^2 \sigma^3\\
&\sim \varepsilon^2 \lambda_{q_0}^{-3} \lambda_{q_1}^2\\
&=\varepsilon^2 \lambda_{q_1-q_0}^{3}\lambda_{q_1}^{-1}.
\end{split}
\]
Recalling that $\lambda_{q_1+1}\leqq 1/\sqrt{\delta}$, when $q_1-q_0 = \frac{C_1M}{\varepsilon}$, we have
\begin{equation}
\begin{split}
\|a\|_{L^2}^2 &\sim \varepsilon^2 2^{\frac{3C_1M}{\varepsilon}} \lambda_{q_1}^{-1}\\
&\gtrsim \varepsilon^2 2^{\frac{3C_1M}{\varepsilon}} \mu_0^{-1} E^{\frac{1}{2}} e^{-\frac{1}{2C_0 \varepsilon}}.
\end{split}
\end{equation}
In order to have $\|a\|_{L^2}^2 = E$, the following condition has to be satisfied
\[
E^{\frac{1}{2}} \gtrsim \varepsilon^2 2^{\frac{3C_1M}{\varepsilon}} \mu_0^{-1}  e^{-\frac{1}{2C_0 \varepsilon}},
\]
which concludes the proof.

\end{proof}

\section{Appendix}
For the equivalence of mild and weak formulation of the NSE in $L^\infty_tL^2_x$, we refer to 
\begin{Theorem}[Fabes, Jones and Riviere \cite{FabesJonesRiviere}] 
\label{FJR}
Assume $u \in L^q((0,T_*);L^p(\R^3))$ with $p,q\geqq 2$ and $p<\infty$. Then, $u$ is a mild solution of \eqref{NS} is and only if it is a weak solution. 
\end{Theorem}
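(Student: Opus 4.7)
The plan is to establish the two implications separately by using a carefully chosen test function that is generated by the heat semigroup. The integrability assumption $u \in L^q(0,T_*;L^p)$ with $p,q \geq 2$ and $p<\infty$ ensures that $u \otimes u \in L^{q/2}(0,T_*;L^{p/2})$, so the Leray projection of $\nabla \cdot (u \otimes u)$ defines a bounded time-integrable distribution, and every integral that appears below converges absolutely by H\"older's inequality combined with standard $L^r$ estimates for $e^{t\Delta}$ and its derivatives.

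For the direction \emph{mild} $\Rightarrow$ \emph{weak}, I would start from the identity
\[
u(t) = e^{t\Delta} a - \int_0^t e^{(t-s)\Delta}\Proj\nabla\cdot(u\otimes u)(s)\,ds,
\]
pair it with a divergence-free test function $\phi \in C_c^\infty(\R^3 \times [0,T_*))$, and integrate in $t$. Using that $e^{t\Delta}$ is self-adjoint, that $\Proj$ commutes with $e^{t\Delta}$ and is self-adjoint on divergence-free fields, and Fubini's theorem (justified by the $L^{q/2}L^{p/2}$ bound on $u\otimes u$), one transfers the semigroup and derivatives onto $\phi$. An integration by parts in $s$ of the resulting inner integral, combined with the identity $\partial_s e^{(t-s)\Delta}\phi = -\Delta e^{(t-s)\Delta}\phi$, produces the standard weak formulation.

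For the converse \emph{weak} $\Rightarrow$ \emph{mild}, the key step is to insert into the weak formulation the test function
\[
\phi(x,s) = \chi_\eta(s)\,e^{(t-s)\Delta}\Proj\psi(x), \qquad \psi \in C_c^\infty(\R^3),
\]
where $t \in (0,T_*)$ is fixed and $\chi_\eta$ is a smooth cutoff that equals $1$ on $[0,t-\eta]$ and vanishes for $s\geq t$. This $\phi$ is divergence-free in $x$ and satisfies $\partial_s\phi+\Delta\phi = \chi_\eta'\,e^{(t-s)\Delta}\Proj\psi$. Substituting and letting $\eta \to 0^+$ produces, after recognizing that $s\mapsto \langle u(s),e^{(t-s)\Delta}\Proj\psi\rangle$ is continuous in $s$ at $s=t$ in the sense of distributions (a consequence of the weak formulation and the regularity of $e^{(t-s)\Delta}\Proj\psi$), the identity
\[
\langle u(t),\Proj\psi\rangle = \langle a,e^{t\Delta}\Proj\psi\rangle + \int_0^t \langle u\otimes u(s),\nabla e^{(t-s)\Delta}\Proj\psi\rangle\,ds,
\]
for a.e.\ $t \in (0,T_*)$ and every $\psi$. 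Since $\Proj\psi$ is dense in the space of divergence-free test functions and $u$ is itself divergence-free, moving $\Proj$ and $\nabla$ back yields the mild formula pointwise a.e.

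The main obstacle will be the passage $\eta\to 0$ and the justification of Fubini in both directions in the low-regularity setting. Both are handled by the hypothesis $p<\infty$, $p,q\geq 2$: this places $u\otimes u$ in $L^{q/2}L^{p/2}$ with $q/2, p/2 \geq 1$, which is exactly the threshold that makes $\int_0^t \|e^{(t-s)\Delta}\Proj\nabla\cdot(u\otimes u)(s)\|_{L^{p'}}\,ds$ (tested against $\psi$) converge by Young/H\"older in time with the singular kernel $(t-s)^{-1/2}$. A density argument in $\psi$ then promotes the distributional identity to an $L^1_{loc}$ pointwise-in-$t$ statement, completing the equivalence.
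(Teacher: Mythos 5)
You should know at the outset that the paper contains no proof of Theorem~\ref{FJR}: it is stated in the Appendix purely as a citation of Fabes--Jones--Rivi\`ere \cite{FabesJonesRiviere}, and is used as a black box in Section 4 to pass from the fixed point $u=\Phi u$ to a weak solution. So there is no in-paper argument to compare yours against; the relevant comparison is with the classical proof, and your sketch is in fact the standard duality argument (in the spirit of the original paper and of its textbook descendants): pair the Duhamel formula with divergence-free test fields for one direction, and insert the backward-heat test function $\chi_\eta(s)\,e^{(t-s)\Delta}\Proj\psi$ into the weak formulation for the converse, using that $\partial_s\phi+\Delta\phi=\chi_\eta'\,e^{(t-s)\Delta}\Proj\psi$ and a Lebesgue-point argument as $\eta\to 0$. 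The signs and the role of the hypotheses ($u\otimes u\in L^{q/2}L^{p/2}$ with $p/2,q/2\geqq 1$, the $(t-s)^{-1/2}$ kernel after moving $\nabla$ onto the test function) are all handled correctly at the level of detail given.

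The one place where your sketch is thinner than the argument actually requires is the admissibility of the test function in the weak formulation. The weak formulation is posed against $\phi\in C^\infty_c$, but $e^{(t-s)\Delta}\Proj\psi$ is not compactly supported: the Leray projection of a compactly supported field decays only like $|x|^{-3}$ (the kernel of $\nabla\Delta^{-1}\nabla\cdot$ is a Calder\'on--Zygmund kernel), so one must truncate by cutoffs $\theta(x/R)$ and show the error terms vanish as $R\to\infty$. This is exactly where $p<\infty$ enters in an essential way --- $u\otimes u\in L^{p/2}$ with $(p/2)'<\infty$ (or $L^1$ against $L^\infty$ when $p=2$) pairs integrably against an $|x|^{-3}$ tail, whereas for $p=\infty$ the argument would genuinely fail --- and your write-up attributes the role of $p<\infty$ only to the time-convolution estimate, which is not where the restriction bites. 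You gesture at ``a density argument in $\psi$,'' but density of $\Proj C^\infty_c$ among divergence-free test fields is a different (and easier) point than the enlargement of the admissible test class, which needs the explicit cutoff computation. With that step spelled out, the proposal is a correct reconstruction of the cited result.
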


\par \noindent 
{\bf Disclosure statement}: 
The authors declare that there are no competing interests to declare. 
\par \noindent
{\bf Data Availability Statements}: 
The authors confirm that the data supporting the findings of this study are available within the article.

\bibliography{WP_NS_BIB}

\end{document}